\DeclarePairedDelimiter\abs{\lvert}{\rvert}
\DeclarePairedDelimiter\norm{\lVert}{\rVert}
\newcommand{\eps}{\varepsilon}
\newcommand{\ignore}[1]{}
\newcommand{\R}{\mathbb{R}}
\newcommand{\ch}{\boldsymbol{1}}
\newcommand{\dx}{d\textbf{x}}
\newcommand{\sgn}{\mathrm{sgn}\,}
\theoremstyle{definition}
\newtheorem{thm}{Theorem}[section]
\newtheorem{lemma}[thm]{Lemma}
\newtheorem{rem}[thm]{Remark}
\newtheorem{notation}[thm]{Notation}
\newtheorem{proposition}[thm]{Proposition}
\newtheorem{mainthm}[thm]{Main Theorem}
\numberwithin{equation}{section}
\newcommand{\subjclass}[1]{\bigskip\noindent\emph{2010 Mathematics Subject Classification: }\enspace#1}
\newcommand{\keywords}[1]{\noindent\emph{Keywords:}\enspace#1}
\begin{document}


\baselineskip=17pt


\title{Relaxation to a planar interface in the\\ Mullins-Sekerka problem}

\author{Olga Chugreeva\\
RWTH Aachen University\\
olga@math1.rwth-aachen.de\\
Felix Otto\\
MPI for Mathematics in the Sciences, Leipzig, Germany\\
felix.otto@mis.mpg.de\\
Maria G. Westdickenberg\\
RWTH Aachen University\\
maria@math1.rwth-aachen.de}

\date{5 October, 2017}

\maketitle


\begin{abstract}
We analyze the convergence rates to a planar interface in the Mullins-Sekerka model by applying a relaxation method based on relationships among distance, energy, and dissipation. The relaxation method was developed by two of the authors in the context of the 1-d Cahn-Hilliard equation and the current work represents an extension to a higher dimensional problem in which the curvature of the interface plays an important role. The convergence rates obtained are optimal given the assumptions on the initial data.

\subjclass{Primary 35B35; Secondary 35Q99,35R35.}

\keywords{Mullins-Sekerka; energy method; relaxation rates; planar profile.}
\end{abstract}

\section{Introduction}
\label{s:Intro}
The Mullins-Sekerka model \cite{MS1} for the evolution of phase interfaces has been fundamental in developing an understanding of solidification in pure liquids and binary alloys. The surrounding literature is vast; we refer to the classical works \cite{MS2,L} and the thousands of citing references. Mathematically, quantifying the behavior of solutions of the Mullins-Sekerka problem, a prototypical nonlocal free boundary problem, is interesting in its own right and because Mullins-Sekerka arises in a sharp-interface limit of
the Cahn-Hilliard equation, as demonstrated in the seminal paper \cite{P} and rigorously established in \cite{ABC}.

The quasistatic Mullins-Sekerka model with Gibbs-Thompson boundary condition in $d=2$ consists of the dynamics of an interface $\Gamma$ evolving with the normal velocity
\begin{align}
  V=- \big[\nabla f\cdot n\big]
 ,\label{vel}
\end{align}
where
$f:\R^2\to\R$ satisfies
\begin{align}\label{ms}
\begin{cases}
\Delta f=0&\text{in } \ \R^2\setminus{\Gamma},\\
  f=\kappa&\text{on }\ \Gamma.
  \end{cases}
\end{align}
Here $\kappa$ is the curvature of $\Gamma$, $n$ is the unit outward normal vector to the interface, and $[\nabla f\cdot n]$ is the jump in the normal derivative across the interface, cf.~\eqref{jump} below.

We are interested in capturing relaxation rates for an interface $\Gamma$ that converges in time to a planar interface. We work under the assumption that the interface can be parameterized as the graph of a smooth function $h$ via
\begin{align}
\Gamma(t)=\{(x,z)\in \R^{2}\colon z=h(x,t),\, x\in \R\}.\label{param}
\end{align}
Without loss of generality, we choose the orientation of $\Gamma$ so that $n$ is the unit outward normal to the region $\Omega_{+}:=\{(x,z)\in\R^{2}\colon z>h(x)\}$ lying to the right of~$\Gamma$, and $\sgn V=-\sgn \kappa=\sgn h_{xx}$; see Figure~\ref{figurezz} and Section \ref{S:prelim}.

\begin{figure}
\centering
\begin{tikzpicture}[yscale=1.1, xscale=1.5]

\draw [black, very thick, ->] (4, 5) -- (4,-1);
\draw[blue, thick, domain=-0.5:4.5, samples=500] plot [smooth] ({4+0.02*(\x-4.5)*(\x-4.5)*(\x-2.5)*(\x-0.5)*(\x+0.5)*(\x+0.5)},\x);
\draw [red, very thick, ->] (3.25, 1.63) -- (3.75, 1.63);
\node [red, left] at (3.1, 1.65) {$V$};
\draw [red, very thick, <-] (4.25, 3.45) -- (4.95, 3.45);
\node [red, right] at (5, 3.45) {$V$};
\node [left] at (3.9,-1) {$x$};
\node [blue] at (4.65, 4.25) {\large{$\Gamma$}};
\node at (2, 3.7) {\large{$\Omega _{-}(t)$}};
\node at (6, 3.7) {\large{$\Omega _{+}(t)$}};
\node at (6, 1) {$\Delta f=0$};
\node at (2, 1) {$\Delta f=0$};
\node [right, blue] at (4.1, 0.1) {$f=\kappa \text{ on }\Gamma$};
\end{tikzpicture}
\caption{A sketch of an admissible interface $\Gamma$. The normal is chosen outward with respect to $\Omega_+$. See Section \ref{S:prelim} for details.}\label{figurezz}
\end{figure}
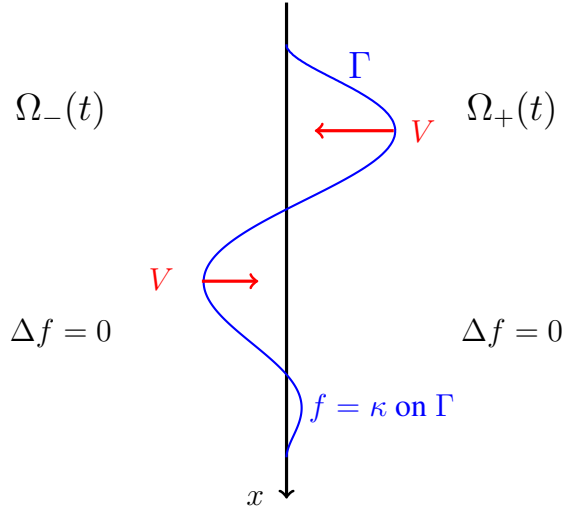

The Mullins-Sekerka problem has the structure of a gradient flow with the triad of squared distance $H$, energy gap $E$, and dissipation $D$ defined via
\begin{align}
H:&=\int_{\R^2} |\nabla \varphi|^2 \,d\textbf{x}\qquad \text{for }-\Delta\varphi=\textbf{1}_{\Omega_+(t)}-\textbf{1}_{\Omega_0},\label{h}\\
E:&=\int_\R \Big( \sqrt{1+h_x^2}-1\Big)\,dx,\label{e}\\
D:&=\int_{\R^2} |\nabla f|^2\,\dx,\label{d}
\end{align}
where $\Omega_+(t)$ indicates the area to the right of $\Gamma$ at time $t$ and $\Omega_0$ represents the area to the right of the $x$-axis. In \cite{OW}, a method for establishing convergence rates to equilibrium based on algebraic and differential relationships among distance, energy, and dissipation is developed in the context of the 1-d Cahn-Hilliard equation. This relaxation framework builds on observations of Brezis \cite{B} for gradient flows with respect to a convex energy and establishes an extension to the mildly nonconvex setting. It is one of the main ingredients in the study of metastability of the 1-d Cahn-Hilliard equation carried out in~\cite{SW} and is used in \cite{E} to study convergence to equilibrium for the thin-film equation. Here we apply the relaxation framework to the Mullins-Sekerka model.

\begin{rem}[Scale invariance and the smallness condition]
The Mullins-Sekerka model has the following scaling properties. If one rescales $\textbf{x}=\lambda \hat{\textbf{x}}$ and $t=\lambda^3\hat{t}$, then
\begin{align*}
h=\lambda \hat{h},\qquad  H=\lambda^4\hat{H},\qquad E=\lambda\hat{E},\qquad D=\lambda^{-2}\hat{D}.
\end{align*}
Hence $E^2D$ is scale invariant, and it is reasonable to impose a smallness condition on this quantity, as we will do. Also $|h_x|$ is scale invariant, and we deduce smallness of the slope from smallness of $E^2D$; cf. Remark \ref{rem:opt} below.

Dimension $d=4$ seems to be the critical dimension at which the method of this paper breaks down (formally, $E\sim \lambda^3$ and $D\sim 1$). It may be possible to use the method of this paper for $d=3$ with $ED^2$ playing the role of the scale invariant quantity, however this would certainly be more complicated. Here we consider $d=2$ as the simplest setting in which geometric effects such as curvature play a role.
\end{rem}

We assume that there exists a global in time, sufficiently regular solution of \eqref{vel}-\eqref{ms}. On the level of the parametrization~\eqref{param}, this implies in particular that $h(\cdot,t)\in H^{2}(\R)$ for all $t$ and that $h$ is a smooth function of~$t$.
For local and in some cases global existence on bounded domains, we refer to \cite{C} for weak solutions and to \cite{CHY,ES,ES2} for smooth solutions.
Here, we consider $d=2$ and an interface that is a perturbation of the $x$-axis. We show that smooth solutions satisfy optimal convergence rates in time; see Subsection \ref{ss:opt} for a discussion of optimality.

\begin{notation}\label{notn}
 We will use the following notation throughout the work.
We write
\begin{align*}
A\lesssim B
\end{align*}
for two quantities $A$ and $B$ if there exists a universal constant $C\in(0, \infty)$ such that $A\leqslant CB$.

We write
\begin{align*}
A\sim B
\end{align*}
 if $A\lesssim B$ and $B\lesssim A$.

We say that
\begin{align*}
A\ll B \ \text{ implies } \ \tilde A\ll \tilde B \ (\text{or }\tilde A\lesssim  \tilde B,\text{ respectively})
\end{align*}
 if for every $\eps>0$ there exists a $\delta>0$ such that $A\leqslant \delta B$ implies $\tilde A\leqslant \eps \tilde B$ (or if there exist $C\in(0, \infty)$ and $\delta>0$ such that $A\leqslant \delta B$ implies $\tilde A\leqslant C\tilde B$, \text{ respectively}).
\end{notation}

Our main result is as follows.
\begin{mainthm}\label{t:MS}
Consider a global solution $(f,\Gamma)$ of \eqref{vel}-\eqref{ms}. Let $H(t)$, $E(t),\,D(t),\,h(\cdot,t)$ represent the squared distance, energy, dissipation, and $z$-coordinate of $\Gamma$ as functions of time, and let $H_0:=H(0)$, $E_0:=E(0),$ $D_0:=D(0)$. Under the assumptions that
\begin{align}
E_0^2D_0\ll 1\label{assm}
\end{align}
and
\begin{equation}
\sup_{\R}|h_{x}(\cdot, 0)|\leqslant 1, \label{assmh}
\end{equation}
there holds
\begin{align}
  (E^2D)(t)\ll 1\qquad \text{for all }t\geqslant 0.\label{regime}
\end{align}
As a consequence, the slope remains small for all time, i.e.,
\begin{align}
   \sup_{\R}|h_x(\cdot,t)|&\ll 1\qquad\text{for all }t\geq 0,\label{hxsmall}
\end{align}
and the energy and squared distance obey
 \begin{align}
  E(t)&\lesssim \min\left\{E_0,\frac{H_0}{t}\right\},\label{eless}\\
  H(t)&\lesssim H_0.\label{hless}
 \end{align}
 Moreover, for times of the order $t\gg H_0^{3/4}$, there holds
 \begin{align}\label{dless}
  D(t)\lesssim \frac{H_0}{t^2}.
 \end{align}
\end{mainthm}

\begin{rem}\label{rem:opt}
From Theorem~\ref{t:MS} one can read off relaxation rates for the slope~$h_{x}$ and the height $h$. The key estimate for the slope is provided by Lemma~\ref{l:stays} in the form
\begin{align}
  \sup_\R\,|h_x(\cdot,t)|\lesssim (E^2(t)D(t))^{1/6}\quad\text{if }|h_x(\cdot,t)|\leq 1.\label{h0912}
\end{align}
On the one hand, \eqref{h0912} and \eqref{regime} deliver \eqref{hxsmall}. On the other hand, \eqref{h0912} together with \eqref{hxsmall}, \eqref{eless}, and \eqref{dless} yields the relaxation rate
\begin{align}\label{hxt}
\sup_\R\,|h_x(\cdot,t)|\lesssim \frac{H_0^{1/2}}{t^{2/3}}\qquad\text{ for } t\gg H_0^{3/4}.
\end{align}

To obtain relaxation of the height $h$, we make use of the interpolation inequality
\begin{align*}
  \sup|h|\lesssim \left(\int_\R \big( |\partial_x|^{-1/2} h\big)^2\,dx\right)^{1/6}\left(\int_\R h_x^2\,dx\right)^{1/3};
\end{align*}
see \eqref{eq:DefFrac} for the definition of the fractional Sobolev norms.
An application of
Lemmas \ref{l:2p1} and \ref{l:2p4} below then yields
\begin{align*}
  \sup_\R |h(\cdot,t)|\lesssim (H(t)E^{2}(t))^{1/6}\quad\text{if }|h_x(\cdot,t)|\leq 1,
\end{align*}
which in light of \eqref{hxsmall}, \eqref{eless}, and \eqref{hless} implies
\begin{align}\label{ht}
\sup_\R\,|h(\cdot,t)|\lesssim \min \Big\{(H_{0}E_{0}^{2})^{1/6}, \frac{H_0^{1/2}}{t^{1/3}}\Big\}.
\end{align}
\end{rem}

\begin{rem}
 Since we assume that the initial data satisfies $|h_x|\leq 1$, all of our constants are universal. One could instead assume $|h_x|\leq C<\infty$, but then the constants would depend on $C$.
\end{rem}

We prove Theorem \ref{t:MS} in Section \ref{S:ode} after first establishing the necessary algebraic and differential relationships among $H,\,E,\,\text{and }D.$
\subsection{Optimality}\label{ss:opt}
We now give a formal argument for the optimality of the above rates.
Our first observation is that the decay estimates (\ref{eless}) and (\ref{dless}) are optimal.
It is well-known, and easily seen from (\ref{kaph}) and (\ref{vandh}), that the linearization
of Mullins-Sekerka around the flat interface is given by the third-order parabolic equation
\begin{align}\label{h1d}
h_t+|\partial_x|^3h=0.
\end{align}
Fourier transforming in space yields
\begin{align*}
\||\partial_x|^nh\|_2^2=\int_{\mathbb{R}}|k|^{2n}\exp(-2t|k|^3)|\hat h_0|^2dk,
\end{align*}
which allows us to write
\begin{align*}
tE&=t\|\partial_xh\|_2^2=\int_{\mathbb{R}}|t^{1/3}k|^{3}\exp(-2|t^{1/3}k|^3)\frac{1}{|k|}|\hat h_0|^2dk,\\
t^2D&=t^2\||\partial_x|^{5/2}h\|_2^2=\int_{\mathbb{R}}|t^{1/3}k|^{6}\exp(-2|t^{1/3}k|^3)\frac{1}{|k|}|\hat h_0|^2dk.
\end{align*}
In view of
\begin{align}\label{230817}
&H_0=\int_{\mathbb{R}}\frac{1}{|k|}|\hat h_0|^2dk\\
\mbox{and}\quad
&\sup_{t\ge 0}|t^{1/3}k|^{n}\exp(-2|t^{1/3}k|^3)=c(n)\in(0,\infty),
\end{align}
we see that we cannot expect better than $tE,t^2D\lesssim H_0$, cf.\  (\ref{eless}) and (\ref{dless}).

\medskip

Our second observation is that the rates (\ref{hxt}) and (\ref{ht}) are optimal for compactly supported data with $\int_{\mathbb{R}}h_0\,dx\not=0$
and ``almost optimal''
for compact perturbations under our assumption of $H_0<\infty$. The pointwise time-asymptotic behavior for (\ref{h1d}) for compactly supported
initial data $h_0$ is governed
by its ``heat kernel'', which in view of the conservative form of (\ref{h1d}) and its scale invariance
must be of the form $t^{-1/3}G(\frac{x}{t^{1/3}})$, with a smooth (but only algebraically decaying)
mask $G$ given by $\hat G(k)=\exp(-|k|^3)$. This shows that for compactly supported initial data $h_0$
the decay rates $\sup_{\mathbb{R}}|h_x(\cdot,t)|=O(t^{-2/3})$ and $\sup_{\mathbb{R}}|h(\cdot,t)|=O(t^{-1/3})$,
cf.\  (\ref{hxt}) and (\ref{ht}), are optimal. These rates hold provided $\int_{\mathbb{R}}h_0\,dx\not=0$, whereas for
$\int_{\mathbb{R}}h_0\,dx=0$ they improve by an order of $O(t^{-1/3})$. Note that in view of (\ref{230817}), our
assumption $H_0<\infty$ implies $\int_{\mathbb{R}}h_0\,dx=\hat h_0(0)=0$, but this enforcement of vanishing average
is only borderline since the integral $\int_{\mathbb{R}}\frac{1}{|k|}dk$ diverges only logarithmically at $k=0$.
Hence while the rates (\ref{hxt}) and (\ref{ht}) are not optimal under our assumption within the class of
compactly supported initial data, they would be if we minimally changed our assumption to
$\int_{\mathbb{R}}(|\partial_x|^{-\alpha}h_0)^2dx<\infty$ with $\alpha<\frac{1}{2}$.

\subsection{Previous results}
We are not aware of previous results on algebraic-in-time rates of relaxation to a planar front for the Mullins-Sekerka problem. Our results are related to those for relaxation to a planar front in the Cahn-Hilliard equation in \cite{H}---and similarly for $d\geq 3$ in \cite{KKT}---although the smallness condition that is assumed in those works is such that the perturbation vanishes in the sharp-interface limit. A similar remark holds for the relaxation to planar fronts in the related nonlocal equation analyzed in \cite{CO}.

Recently an application of the relaxation framework from \cite{OW} to the multi-dimensional Cahn-Hilliard equation for initial data that are close in $L^\infty$ to the planar profile was carried out in \cite{DGS}. The assumption of $L^\infty$ closeness (which they show is preserved by the evolution) allows them to use the linear energy-energy-dissipation estimates to deduce nonlinear estimates. A more ambitious goal would be to analyze the evolution for initial data far from the planar profile. On the one hand, existence methods for the Cahn-Hilliard equation based on existence of ``good'' solutions to the Mullins-Sekerka evolution (see \cite{ABC} and also \cite{CCO}) may help to derive from the present paper estimates for the relaxation of solutions to the higher dimensional Cahn-Hilliard equation for well-prepared initial data close to their (nonplanar) sharp-interface limit. On the other hand, an even more challenging problem would be to handle initial data that are not well-prepared and for which one expects an initial layer in time: On a short timescale, the initial data relaxes to a small neighborhood of its sharp-interface limit and on longer timescales, the evolution approximates the Mullins-Sekerka evolution. This would require analyzing the energy-energy-dissipation estimates in the presence of \emph{both} perturbations away from the sharp-interface limit \emph{and} perturbations due to the geometric effects of non-planarity---and would be interesting to consider in future work.

Our results are also related to previous results on stability of and exponential convergence toward equilibrium solutions. Global existence and exponential convergence to a circular interface for initial data that is asymptotically close to a circle is established in \cite{C}; the methods include use of a regularized problem and a priori estimates. Later in \cite{ES2}, global existence of classical solutions and exponential convergence to spheres in $d\geq 2$ for initial data close to a sphere is shown; the method relies on center manifold theory. More recently, there has been interest in global existence for a related problem known as the Muskat problem; we mention for instance \cite{CC,PS} and the references cited there.

\subsection{Organization} In Section \ref{S:prelim} we fix notation. The algebraic relationships that we will need for our result are collected and proved in Section \ref{S:alg} and the differential relationships in Section \ref{S:diff}. Finally, in Section \ref{S:ode}, we use these relationships together with an ODE argument to prove Theorem \ref{t:MS}. Through most of Sections \ref{S:alg} and \ref{S:diff} we  assume that $|h_x|\leq 1$ and later in the proof of Theorem \ref{t:MS} we  show that this condition holds true for the solution. The appendix contains some technical facts about fractional Sobolev spaces that we use in the proofs.

\section{Preliminaries}\label{S:prelim}
We remark for reference below that the normal vector $n$, the curvature $\kappa$, and the normal velocity $V$ can be expressed in terms of the parameterization~\eqref{param} as
\begin{align}
  n&=\frac{(h_x,-1)}{\sqrt{1+h_x^2}},\notag\\
  \kappa&=\frac{d}{dx}\frac{h_x}{\sqrt{1+h_x^2}}=\frac{h_{xx}}{(\sqrt{1+h_x^2})^3},\label{kaph}\\
  V&=-\frac{h_{t}}{\sqrt{1+h_x^2}}.\label{vandh}
\end{align}
Furthermore, under the assumption that $|h_{x}|\leqslant 1$, the line integral element on the curve $\Gamma$ satisfies
\begin{align}
ds=\sqrt{1+h_x^2}\,dx \sim dx.  \label{lineint}
\end{align}
We will occasionally denote by $\R^{2}_{+}$ the right half-plane:
\begin{align*}
\R^{2}_{+}=\{(x, z)\in \R^{2}: z>0\}
\end{align*}
and by
\begin{align*}
  \dx=dx\,dz
\end{align*}
the area element.

For a function $g$ that is continuous on the closures of both regions $\Omega_{+}$ and $\Omega_{-}$, we denote by
\begin{align}
  \text{$g_{+}$ and $g_{-}$}\label{notnpm}
\end{align}
the restriction to $\Gamma$ coming from the respective region. We denote the jump in $g$ across $\Gamma$ by
\begin{align}\label{jump}
[g]:=g_{+}-g_{-}.
\end{align}

\section{Algebraic lemmas}\label{S:alg}
In this section, we collect the fundamental algebraic relationships that we will use.
For our definition of the homogeneous fractional Sobolev spaces and a summary of the facts that we will need here and below, we refer to Appendix~\ref{S:fracsp}.

We begin by relating the curvature $\kappa$ and the height $h$ to energy and dissipation.
\begin{lemma}\label{l:2p1}
  Under the assumption that
  \begin{align}
    \sup_\R|h_x|\leq 1,\label{hx11uh}
  \end{align}
  there holds
\begin{align}
  \int_\Gamma\left( |\partial_s|^{1/2}\kappa\right)^2\,ds  &\lesssim D,\label{l1.1}\\
  \int_\Gamma\left( |\partial_s|^{-1}\kappa\right)^2\,ds&\lesssim\int_\R h_x^ 2\,dx\sim E,\label{l1.2}\\
\int_\Gamma \kappa^2\,ds&\sim  \int_\R h_{xx}^2\,dx\lesssim E^{1/3}D^{2/3}.\label{l1.3}
  \end{align}
\end{lemma}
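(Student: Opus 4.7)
The plan is to handle the three estimates \eqref{l1.1}--\eqref{l1.3} in turn, using \eqref{hx11uh} throughout to pass between arclength derivatives on $\Gamma$ and $x$-derivatives on $\R$. In particular, $ds\sim dx$ by \eqref{lineint}, and the parametrization $x\mapsto(x,h(x))$ is bi-Lipschitz from $\R$ onto $\Gamma$ with constants depending only on the slope bound.

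For \eqref{l1.1}, I use that \eqref{ms} identifies $f$ as the harmonic extension of $\kappa|_\Gamma$ to each of the two complementary domains $\Omega_\pm$. Hence $D=\int_{\R^2}|\nabla f|^2\,\dx$ is the minimal Dirichlet energy among all admissible extensions, and standard trace/extension theory on each of $\Omega_\pm$ identifies this minimum with (twice) the squared $\dot H^{1/2}(\Gamma)$-norm of $\kappa$. The bi-Lipschitz equivalence then transports this norm to the $\dot H^{1/2}$-norm taken with respect to arclength along $\Gamma$, yielding \eqref{l1.1}.

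For \eqref{l1.2}, the key observation is the first identity in \eqref{kaph}: the tangent angle $\psi:=h_x/\sqrt{1+h_x^2}$ satisfies $d\psi=\kappa\,ds$. Thus $\psi$ serves as an arclength primitive of $\kappa$ that vanishes at infinity, so $|\partial_s|^{-1}\kappa$ may be identified with $\psi$. Combining $|\psi|\leq|h_x|$ with $ds\sim dx$ produces the first half of \eqref{l1.2}, while the equivalence $\int_\R h_x^2\,dx\sim E$ is just the elementary estimate $\sqrt{1+h_x^2}-1\sim h_x^2$ for $|h_x|\leq 1$ (integrated against $dx$). For \eqref{l1.3}, the equivalence $\int_\Gamma\kappa^2\,ds\sim\int_\R h_{xx}^2\,dx$ follows from the second identity in \eqref{kaph}, from \eqref{lineint}, and from the slope bound. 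For the interpolation part, I reinterpret \eqref{l1.1} and the $L^2$-version of \eqref{l1.2} as
\begin{align*}
\||\partial_x|^{5/2}h\|_{L^2(\R)}^2\lesssim D,\qquad \||\partial_x|h\|_{L^2(\R)}^2\sim E,
\end{align*}
using $|\partial_s|^{1/2}\kappa \leftrightarrow |\partial_x|^{5/2}h$ (since $|\partial_x|^{1/2}\partial_x^{2}=|\partial_x|^{5/2}$ at the Fourier-multiplier level, with lower-order commutator terms controlled by $|h_x|\leq 1$). Interpolation between $\dot H^{1}(\R)$ and $\dot H^{5/2}(\R)$ at exponent $\dot H^{2}(\R)$, i.e.\ the choice $\theta=2/3$ in $2=(1-\theta)\cdot 1+\theta\cdot 5/2$, then delivers
\begin{align*}
\|h_{xx}\|_{L^2}^{2}\lesssim \||\partial_x|h\|_{L^2}^{2/3}\,\||\partial_x|^{5/2}h\|_{L^2}^{4/3}\lesssim E^{1/3}D^{2/3}.
\end{align*}

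The most delicate point is to make rigorous the identifications $\partial_s\leftrightarrow\partial_x$ at fractional order, particularly in \eqref{l1.1} where one needs an $\dot H^{1/2}$-comparison under a bi-Lipschitz change of variables, and in the $|\partial_s|^{1/2}\kappa\leftrightarrow|\partial_x|^{5/2}h$ step of \eqref{l1.3}. For both I would defer to the fractional Sobolev machinery collected in Appendix~\ref{S:fracsp}; everything else reduces to elementary pointwise inequalities and the scale-invariant Sobolev interpolation inequality.
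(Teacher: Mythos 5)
Your treatment of \eqref{l1.1} and \eqref{l1.2} is in substance the paper's argument: the trace inequality for the Lipschitz epigraph $\Omega_+$ (which the paper obtains by flattening with $\tilde z=z-h(x)$) gives \eqref{l1.1}, and identifying $|\partial_s|^{-1}\kappa$ with an arclength primitive gives \eqref{l1.2}. One small slip there: $\psi=h_x/\sqrt{1+h_x^2}$ satisfies $\partial_x\psi=\kappa$, not $\partial_s\psi=\kappa$; the arclength primitive of $\kappa$ is the tangent angle $\theta=\arctan h_x$. Since $|\theta|\leq|h_x|$ and $ds\sim dx$ under \eqref{hx11uh}, replacing $\psi$ by $\theta$ repairs this at no cost.

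The genuine gap is in your route to the interpolation part of \eqref{l1.3}. You propose to convert \eqref{l1.1} into $\norm{|\partial_x|^{5/2}h}_2^2\lesssim D$ via the identification $|\partial_s|^{1/2}\kappa\leftrightarrow|\partial_x|^{5/2}h$, ``with lower-order commutator terms controlled by $|h_x|\leq 1$.'' This is exactly the step that does not go through: $\kappa$ and $h_{xx}$ differ by the multiplier $(1+h_x^2)^{-3/2}$, and multiplication by a merely bounded function is \emph{not} bounded on $\dot H^{1/2}$. Estimating the off-diagonal part of the Gagliardo seminorm of $(1+h_x^2)^{\pm 3/2}\kappa$ produces terms involving the modulus of continuity of $h_x$, i.e.\ $h_{xx}$ --- precisely the quantity \eqref{l1.3} is trying to control --- so the argument is circular; the same problem afflicts the fractional change of variables $s\leftrightarrow x$ at order $5/2$. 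The paper sidesteps this entirely by interpolating \emph{in $\kappa$ itself}: $\norm{\kappa}_{L^2(\Gamma)}^2\leq\norm{|\partial_s|^{1/2}\kappa}_{L^2(\Gamma)}^{4/3}\norm{|\partial_s|^{-1}\kappa}_{L^2(\Gamma)}^{2/3}\lesssim D^{2/3}E^{1/3}$, which uses \eqref{l1.1} and \eqref{l1.2} as they stand, and only then compares $\int_\Gamma\kappa^2\,ds$ with $\int_\R h_{xx}^2\,dx$ --- a purely $L^2$-level comparison where the bounded multiplier and $ds\sim dx$ are harmless. You should restructure the last step this way; the exponent arithmetic in your version is otherwise correct.
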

\begin{proof}

We first consider \eqref{l1.1}. Because of assumption \eqref{hx11uh}, we can straighten the curve $\Gamma$ by introducing the new variable $\tilde{z}=z-h(x)$. This transforms \eqref{ms} to the problem
\begin{align*}
\begin{cases}
  -\tilde{\nabla}\cdot a\tilde{\nabla} f=0, & (x,\tilde{z})\in \R\times(0,\infty),\\
 f=\kappa, &(x,\tilde{z})\in\R\times\{\tilde{z}=0\},
\end{cases}
\end{align*}
where
\begin{align*}
  a:=\left(\begin{matrix}
    1&-h_x(x)\\
    0&1
  \end{matrix}\right)^{T}\left(\begin{matrix}
    1&-h_x(x)\\
    0&1
  \end{matrix}\right)
\end{align*}
is uniformly positive definite because of \eqref{hx11uh}. Therefore the transformed problem is uniformly elliptic and by the trace estimate, the solution satisfies
\begin{align}
  \int_\R\left( |\partial_x|^{1/2}\kappa\right)^2\,dx=\int_\R\left( |\partial_x|^{1/2}f(x,0)\right)^2\,dx\lesssim \int_{\R^{2}_{+}}|\tilde\nabla f|^2\,dx\,d\tilde{z}.\label{ad.1}
\end{align}
On the other hand, because of assumption \eqref{hx11uh} there holds
\begin{align}
  \int_\Gamma\left( |\partial_s|^{1/2}\kappa\right)^2\,ds  &\sim\int_\R\left( |\partial_x|^{1/2}\kappa\right)^2\,dx\label{ad.2}\\
\text{and}\quad  \int_{\R^{2}_{+}}\abs{\tilde{\nabla} f}^2\,dx\,d\tilde{z}&\lesssim\int_{\R^2}\left|\nabla f\right|^2\,dx\,dz;\label{ad.3}
\end{align}
cf.\ \eqref{eq:frac_sim}.
The combination of \eqref{d}, \eqref{ad.1}, \eqref{ad.2}, and \eqref{ad.3} leads to \eqref{l1.1}.

We now address \eqref{l1.2}. Under assumption \eqref{hx11uh}, we have
\begin{align}
  |\theta|\sim |h_x|,\label{thet}
\end{align}
where $\theta$ is the angle between the tangent line to $\Gamma$ and the $x$-axis. Recalling that $\kappa=\frac{d\theta}{ds}$, we find for an arbitrary $\zeta\in \dot{H}^{1}(\Gamma)$ that
\begin{eqnarray*}
  \int_\Gamma\kappa\,\zeta\,dx&=&-\int_\Gamma \theta\,\zeta_s \,ds\\
 &\leq &\left(\int_\Gamma \theta^2\,ds\,\int_\Gamma \zeta_s ^2\,ds\right)^{1/2}\\
  &\overset{\eqref{thet}}\lesssim &\left(\int_\R h_x^2\,dx\,\int_\Gamma \zeta_s ^2\,ds\right)^{1/2}.
\end{eqnarray*}
From this estimate we deduce the first inequality in \eqref{l1.2} (via duality in $\dot{H}^{-1}$). The second part of \eqref{l1.2} is a direct consequence of definition \eqref{e},  assumption \eqref{hx11uh}, and the identity
\begin{align*}
h_{x}^{2}=(\sqrt{1+h_{x}^{2}}-1)(\sqrt{1+h_{x}^{2}}+1).
\end{align*}

The first part of \eqref{l1.3} follows from formula \eqref{kaph} and assumption \eqref{hx11uh}. The second part of \eqref{l1.3}, on the other hand, follows from interpolation in terms of~$\kappa$ using \eqref{l1.1} and the second part of \eqref{l1.2} (cf.\ Appendix~\ref{S:fracsp}).
\end{proof}
We will now use the previous lemma to show that $\sup|h_x|$ is controlled by the quantity $E^2D$. Later in Lemma \ref{l:invariant}, we will show that if $E^2D$ is small initially, then it stays small for all time.
\begin{lemma}\label{l:stays}
  Under the assumption that
  \begin{align}
    \sup_\R|h_x|\leq 1,\label{hx1}
  \end{align}
  there holds
  \begin{align}
    \sup_\R |h_x|\lesssim (E^2D)^{1/6}.\label{ed2}
  \end{align}
\end{lemma}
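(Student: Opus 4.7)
The plan is to combine the standard one-dimensional Gagliardo–Nirenberg / Agmon-type interpolation inequality
\[
\sup_\R |h_x|^2 \lesssim \Big(\int_\R h_x^2\,dx\Big)^{1/2}\Big(\int_\R h_{xx}^2\,dx\Big)^{1/2}
\]
(which for a function $u \in H^1(\R)$ is the familiar bound $\|u\|_\infty^2 \lesssim \|u\|_2\|u'\|_2$, proved by writing $u(x)^2 = 2\int_{-\infty}^x u u'\,dy$ and applying Cauchy–Schwarz) with the estimates already established in Lemma~\ref{l:2p1}.

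Under the smallness of slope assumption \eqref{hx1}, Lemma~\ref{l:2p1} gives us two ingredients: from \eqref{l1.2} we get $\int_\R h_x^2\,dx \sim E$, and from \eqref{l1.3} we get $\int_\R h_{xx}^2\,dx \lesssim E^{1/3} D^{2/3}$. Substituting these two bounds into the interpolation inequality yields
\[
\sup_\R |h_x|^2 \lesssim E^{1/2} \cdot E^{1/6} D^{1/3} = E^{2/3} D^{1/3},
\]
and taking square roots produces exactly $\sup_\R |h_x| \lesssim (E^2 D)^{1/6}$, which is \eqref{ed2}.

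There is no real obstacle here: the argument is essentially a one-line interpolation once Lemma~\ref{l:2p1} is in hand. The only thing to be slightly careful about is applying $\|u\|_\infty^2 \lesssim \|u\|_2 \|u'\|_2$ at the level of $u = h_x$, which requires that $h_x$ and $h_{xx}$ both lie in $L^2(\R)$; this follows from the finiteness of $E$ and $D$ (via \eqref{l1.2} and \eqref{l1.3}) together with the assumed regularity of $h$. No new ideas beyond the already-proven algebraic lemma are needed.
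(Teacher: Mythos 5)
Your proposal is correct and follows essentially the same route as the paper: the authors likewise combine the Agmon-type interpolation $\sup_\R|h_x|\lesssim\left(\int_\R h_x^2\,dx\int_\R h_{xx}^2\,dx\right)^{1/4}$ with \eqref{l1.2} and the second part of \eqref{l1.3}. The exponent bookkeeping matches, so nothing further is needed.
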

\begin{proof}
We recall the interpolation inequality
  \begin{equation}
    \sup_\R |h_x|\lesssim \left( \int_\R h_x^2\,dx \int_\R h_{xx}^2\,dx\right)^{1/4}\overset{\eqref{l1.2}}{\lesssim} \left( E\,\int_\R h_{xx}^2\,dx\right)^{1/4}.\notag
  \end{equation}
Plugging the second part of \eqref{l1.3} into the right-hand side gives \eqref{ed2}.
\end{proof}

Our next goal is to measure the $\dot{H}^{-1/2}$ seminorm of $h$ in terms of the squared distance $H$. First we need a preliminary lemma.
\begin{lemma}\label{littlewood}
For every function $h\in \dot{H}^{-1/2}\cap H^{2}(\R)$, there holds
\begin{align}
\norm{h}_{3}\lesssim \norm{|\partial_x|^{-1/2}h}_{2}^{2/3}\norm{h_x}_{2}^{1/6}\norm{h_{xx}}_{2}^{1/6}.\label{bess}
\end{align}
\end{lemma}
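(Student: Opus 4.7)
The plan is to combine a one-dimensional Sobolev embedding with a three-term Hölder inequality applied in Fourier space. On $\R$ the scale-invariant embedding $\dot H^{1/6}(\R)\hookrightarrow L^{3}(\R)$ (note $\tfrac12-\tfrac13=\tfrac16$), which I expect to follow from the fractional-Sobolev facts collected in Appendix~\ref{S:fracsp}, gives $\norm{h}_{3}\lesssim\norm{|\partial_x|^{1/6}h}_{2}$. This reduces \eqref{bess} to bounding $\norm{|\partial_x|^{1/6}h}_{2}^{2}=\int_{\R}|k|^{1/3}|\hat h(k)|^{2}\,dk$ in terms of the three quantities on the right-hand side of \eqref{bess}.

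For this bound I split the Fourier weight as
\[
|k|^{1/3}\;=\;\bigl(|k|^{-1}\bigr)^{2/3}\bigl(|k|^{2}\bigr)^{1/6}\bigl(|k|^{4}\bigr)^{1/6},
\]
which is legitimate since the exponents $\tfrac23,\tfrac16,\tfrac16$ sum to $1$ and $(\tfrac23)(-1)+(\tfrac16)(2)+(\tfrac16)(4)=\tfrac13$. Splitting $|\hat h|^{2}=(|\hat h|^{2})^{2/3}(|\hat h|^{2})^{1/6}(|\hat h|^{2})^{1/6}$ correspondingly and applying the three-factor Hölder inequality with conjugate exponents $\tfrac32,6,6$ yields
\[
\int_{\R}|k|^{1/3}|\hat h|^{2}\,dk\;\le\;\Bigl(\!\int|k|^{-1}|\hat h|^{2}\,dk\Bigr)^{\!2/3}\Bigl(\!\int|k|^{2}|\hat h|^{2}\,dk\Bigr)^{\!1/6}\Bigl(\!\int|k|^{4}|\hat h|^{2}\,dk\Bigr)^{\!1/6}.
\]
By Plancherel and the definition of the homogeneous fractional Sobolev norms, the right-hand side equals $\norm{|\partial_x|^{-1/2}h}_{2}^{4/3}\,\norm{h_x}_{2}^{1/3}\,\norm{h_{xx}}_{2}^{1/3}$; taking square roots and chaining with the Sobolev embedding produces \eqref{bess}.

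The only step going beyond routine Hölder manipulations is the embedding $\dot H^{1/6}(\R)\hookrightarrow L^{3}(\R)$, and that is where I expect to lean on Appendix~\ref{S:fracsp}. Should that embedding not be packaged there, a self-contained substitute is to write $\norm{h}_{3}^{3}\le\norm{h}_{\infty}\norm{h}_{2}^{2}$ and estimate both $\norm{h}_{\infty}$ and $\norm{h}_{2}^{2}$ by splitting the Fourier integral at a cutoff $|k|=N$ (using the log-convexity bound $\norm{|\partial_x|^{3/2}h}_{2}\le\norm{h_x}_{2}^{1/2}\norm{h_{xx}}_{2}^{1/2}$ for the high-frequency tail) and then optimising $N$; the same exponents $\tfrac23,\tfrac16,\tfrac16$ reappear.
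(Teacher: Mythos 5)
Your argument is correct, and it takes a genuinely different route from the paper's. You first invoke the homogeneous Sobolev embedding $\dot H^{1/6}(\R)\hookrightarrow L^{3}(\R)$ and then dispose of everything else with a single three-factor H\"older inequality in Fourier space; the exponent bookkeeping ($\tfrac23+\tfrac16+\tfrac16=1$, weights summing to $|k|^{1/3}$, conjugate exponents $\tfrac32,6,6$) checks out, and Plancherel converts the three factors into exactly the three norms in \eqref{bess}. The paper instead stays almost entirely in physical space: it chains $\norm{h}_3\lesssim\norm{h}_\infty^{1/3}\norm{h}_2^{2/3}$ with the Agmon-type bound $\norm{h}_\infty\lesssim\norm{h}_2^{1/2}\norm{h_x}_2^{1/2}$ to get $\norm{h}_3\lesssim\norm{h_x}_2^{1/6}\norm{h}_2^{5/6}$, and then uses the single two-term Fourier interpolation $\norm{h}_2\lesssim\norm{|\partial_x|^{-1/2}h}_2^{4/5}\norm{h_{xx}}_2^{1/5}$. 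Your version is more compact and makes the scaling structure transparent in one Fourier-side computation, but it leans on the embedding $\dot H^{1/6}\hookrightarrow L^{3}$, which is \emph{not} among the facts packaged in Appendix~\ref{S:fracsp} (the appendix supplies only interpolation among the seminorms $\norm{|\partial_x|^{\sigma}f}_2$ and duality), so you would need to supply it or fall back on your alternative; that fallback, starting from $\norm{h}_3^3\leq\norm{h}_\infty\norm{h}_2^2$, is essentially the paper's route. The paper's approach buys self-containedness at the cost of an extra intermediate step and less obvious exponents.
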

\begin{proof}
We note that \eqref{bess} is a consequence of the elementary estimates
\begin{align}
  \norm{h}_3&\lesssim \norm{h}_\infty^{1/3}\norm{h}_2^{2/3},\label{f1}\\
  \norm{h}_\infty&\lesssim\norm{h}_2^{1/2}\norm{h_x}_2^{1/2}\label{f2}
 \end{align}
 and of the interpolation estimate
  \begin{equation}
  \norm{h}_2\lesssim \norm{|\partial_x|^{-1/2}h}_2^{4/5}\norm{h_{xx}}_2^{1/5},\label{f3}
  \end{equation}
cf.\ the appendix.
Indeed, inserting \eqref{f2} into \eqref{f1}, we obtain
\begin{align}
  \norm{h}_3\lesssim\norm{h_x}_2^{1/6}\norm{h}_2^{5/6}.\label{f4}
\end{align}
Inserting \eqref{f3} into \eqref{f4} gives \eqref{bess}.
\end{proof}

Lemma \ref{littlewood} together with smallness of $E^2D$ yields a bound on a negative norm of $h$.
\begin{lemma}\label{l:2p4}
  Under the assumption that
  \begin{align}
    \sup_\R|h_x|\leq 1\qquad\text{and}\qquad E^2D\ll 1,\label{assed}
  \end{align}
  there holds
  \begin{align}
    \int_\R\left( |\partial_x|^{-1/2}h\right)^2\,dx\lesssim H.\label{hH}
  \end{align}
\end{lemma}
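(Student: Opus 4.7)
The plan is to establish the bound by duality. Writing
$$\Big(\int_\R (|\partial_x|^{-1/2}h)^2\,dx\Big)^{1/2} = \sup_{\phi}\frac{|\int_\R h\phi\,dx|}{(\int_\R (|\partial_x|^{1/2}\phi)^2\,dx)^{1/2}},$$
I fix $\phi$ in the relevant dual space and let $\tilde\phi\colon\R^2\to\R$ be the Poisson (harmonic) extension of $\phi$, so that $\tilde\phi(x,0)=\phi(x)$ and
$$\int_{\R^2}|\nabla\tilde\phi|^2\,\dx \lesssim \int_\R(|\partial_x|^{1/2}\phi)^2\,dx$$
by the standard trace/extension theorem. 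Since $-h(x)$ equals the $z$-marginal $\int_\R(\textbf{1}_{\Omega_+(t)}-\textbf{1}_{\Omega_0})(x,z)\,dz$ of the signed indicator defining $H$, a direct slicewise computation gives the decomposition
$$-\int_\R h(x)\phi(x)\,dx = \int_{\R^2}(\textbf{1}_{\Omega_+(t)}-\textbf{1}_{\Omega_0})\,\tilde\phi\,\dx + R,$$
with remainder
$$R = \int_\R\int_0^{h(x)} [\tilde\phi(x,z)-\tilde\phi(x,0)]\,dz\,dx$$
(using the signed convention for $\int_0^{h(x)}$ when $h(x)<0$).

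The first term is controlled by $H$ via $\dot{H}^{-1}(\R^2)$-duality: by \eqref{h} and Cauchy-Schwarz,
$$\Big|\int_{\R^2}(\textbf{1}_{\Omega_+(t)}-\textbf{1}_{\Omega_0})\,\tilde\phi\,\dx\Big| \leq H^{1/2}\Big(\int_{\R^2}|\nabla\tilde\phi|^2\,\dx\Big)^{1/2} \lesssim H^{1/2}\Big(\int_\R(|\partial_x|^{1/2}\phi)^2\,dx\Big)^{1/2}.$$
For the remainder, I use the pointwise bound $|\tilde\phi(x,z)-\tilde\phi(x,0)|\leq |z|^{1/2}(\int_0^{|h(x)|}\tilde\phi_z^2\,dz')^{1/2}$, integrate $|z|^{1/2}$ in $z$ over $(0,|h(x)|)$ to pull out a factor $|h(x)|^{3/2}$, and apply Cauchy-Schwarz in $x$ to obtain
$$|R|\lesssim \norm{h}_3^{3/2}\Big(\int_{\R^2}|\nabla\tilde\phi|^2\,\dx\Big)^{1/2}\lesssim \norm{h}_3^{3/2}\Big(\int_\R(|\partial_x|^{1/2}\phi)^2\,dx\Big)^{1/2}.$$
Raising Lemma~\ref{littlewood} to the power $3/2$ and inserting the bounds $\norm{h_x}_2^2\sim E$ and $\norm{h_{xx}}_2^2\lesssim E^{1/3}D^{2/3}$ from Lemma~\ref{l:2p1}, a short exponent count yields
$$\norm{h}_3^{3/2}\lesssim \norm{|\partial_x|^{-1/2}h}_2\,\norm{h_x}_2^{1/4}\norm{h_{xx}}_2^{1/4}\lesssim (E^2D)^{1/12}\,\norm{|\partial_x|^{-1/2}h}_2.$$

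Combining and taking the supremum over $\phi$ gives
$$\norm{|\partial_x|^{-1/2}h}_2\lesssim H^{1/2} + (E^2D)^{1/12}\,\norm{|\partial_x|^{-1/2}h}_2,$$
and the smallness assumption $E^2D\ll 1$ from \eqref{assed} allows me to absorb the second term on the right, producing \eqref{hH}. The principal delicacy is that this absorption presupposes $\norm{|\partial_x|^{-1/2}h}_2<\infty$ to begin with; this is not automatic under mere $H^2$-regularity of $h$ and would be dealt with by applying the estimate first to a truncation or mollification of $h$ (for which the quantity is manifestly finite) and passing to the limit using the uniform bound. A minor secondary point is the sign bookkeeping in the inner integral $\int_0^{h(x)}$ when $h$ changes sign, which is handled uniformly by the signed convention.
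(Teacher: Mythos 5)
Your proposal is correct and follows essentially the same route as the paper's proof: duality for the $\dot H^{-1/2}$ norm via harmonic extension, comparison of $\int_\R h\phi\,dx$ with the bulk pairing against $\textbf{1}_{\Omega_+(t)}-\textbf{1}_{\Omega_0}$ (controlled by $H^{1/2}$), a remainder bounded by $\norm{h}_3^{3/2}$ times the Dirichlet energy of the extension, and absorption using Lemma~\ref{littlewood} together with \eqref{l1.2}, \eqref{l1.3} and $E^2D\ll 1$ (your $(E^2D)^{1/12}$ factor on $\norm{h}_3^{3/2}$ is exactly the square root of the paper's $(E^2D)^{1/6}$ factor on $\norm{h}_3^3$). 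The only addition is your explicit remark on the a priori finiteness of $\norm{|\partial_x|^{-1/2}h}_2$ needed for the absorption, a point the paper leaves implicit in the hypotheses of Lemma~\ref{littlewood}.
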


\begin{proof}
  On the one hand, using duality for the left-hand side of \eqref{hH} and recalling Proposition \ref{prop:harmext}, there holds
  \begin{align}
   \left(\int_\R\left( |\partial_x|^{-1/2}h\right)^2\,dx\right)^{1/2}
   =\sup\left\{\frac{\int_\R h(x)\zeta(x,0)\,dx}{(\int_{\R^2_{+}}|\nabla \zeta|^2\,\dx)^{1/2}}\bigg|\zeta \text{ harmonic on }\R^2_{+}\right\}.\label{hminus}
  \end{align}

  On the other hand,
  for any $\zeta\in \dot{H}^1(\R^2)$ there holds
  \begin{align}
&  \left|\int_{\{h>0\}}\int_0^{h(x)}\zeta(x,z)\,dz\,dx-\int_{\{h<0\}}\int_{h(x)}^0\zeta(x,z)\,dz\,dx \right|\notag\\
& \qquad  \leq H^{1/2}\left(\int_{\R^2}|\nabla \zeta|^2\,dx\,dz\right)^{1/2}.\label{Hzeta}
  \end{align}

We would like to replace the right-hand side of \eqref{hminus} by the left-hand side of \eqref{Hzeta} for a function $\zeta \in \dot{H}^1(\R^2)$ that is harmonic on $\R^{2}_{+}$. Hence we need to measure the difference. Considering without loss of generality the integral over $\{h>0\}$, we estimate:
\begin{align}
 &\left|\int_{\{h>0\}} \int_0^{h(x)} \zeta(x,z)-\zeta(x,0)\,dz\,dx\right|\notag\\
 &\quad=\left|\int_{\{h>0\}} \int_0^{h(x)} \int_{0}^{z}\partial_{z}\zeta(x,z')\,dz'\,dz\,dx\right|\notag\\
  &\quad=\left|\int_{\{h>0\}} \iint \ch_{\{0<z<h(x)\}} \ch_{\{0<z'<z\}} \partial_z\zeta(x,z')\,dz\,dz'\,dx\right|\notag\\
  &\quad=\left|\int_{\{h>0\}}\int \ch_{\{0<z'<h(x)\}}(h(x)-z') \partial_z\zeta(x,z')\,dz'\,dx\right|\notag\\
  &\quad\leq \left(\int_{\{h>0\}}\int_0^{h(x)} (h(x)-z')^2 \,dz'\,dx \int_{\R^2}|\nabla\zeta|^2\,dz'\,dx \right)^{1/2}\notag\\
  &\quad\lesssim\left(\int_{\{h>0\}} |h|^3\,dx  \int_{\R^2}|\nabla\zeta|^2\,dz'\,dx \right)^{1/2}.\label{b1}
\end{align}

The combination of \eqref{hminus}, \eqref{Hzeta}, and \eqref{b1} leads to
\begin{align*}
   \left(\int_\R\left( |\partial_x|^{-1/2}h\right)^2\,dx\right)^{1/2}\lesssim H^{1/2}+\left(\int_{\{h>0\}} |h|^3\,dx  \right)^{1/2}.
\end{align*}

It remains to show that
\begin{align}
 \int_\R |h|^3\,dx\ll \int_\R\left( |\partial_x|^{-1/2}h\right)^2\,dx.\label{much}
\end{align}
To establish \eqref{much}, we combine \eqref{bess}, \eqref{l1.2}, and \eqref{l1.3} to deduce
\begin{eqnarray*}
    \int_\R |h|^3\,dx&\lesssim &(E^2D)^{1/6} \int_\R\left( |\partial_x|^{-1/2}h\right)^2\,dx\\
    &\overset{\eqref{assed}}\ll & \int_\R\left( |\partial_x|^{-1/2}h\right)^2\,dx.
\end{eqnarray*}

\end{proof}

\begin{lemma} Under the assumption that
  \begin{align}
    \sup_\R|h_x|\leq 1\qquad\text{and}\qquad E^2D\ll 1,\notag
  \end{align}
  we have the interpolation inequality
  \begin{align}
    E\lesssim (HD)^{1/2}.\label{ehd}
  \end{align}
\end{lemma}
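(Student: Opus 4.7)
The strategy is to identify $E$ (up to constants) with the coupling integral $-\int_\Gamma h\,\kappa\,ds$ and then bound this coupling by Cauchy--Schwarz in the $\dot H^{-1/2}$--$\dot H^{1/2}$ duality on $\Gamma$. The two factors are controlled by Lemma~\ref{l:2p4} and Lemma~\ref{l:2p1}~\eqref{l1.1}, respectively. Heuristically, for small slopes one has $\kappa\approx h_{xx}$, and the estimate reduces to the elementary Cauchy--Schwarz in Fourier:
\[
\int_\R k^2|\hat h|^2\,dk\leq\Bigl(\int_\R k^{-1}|\hat h|^2\,dk\Bigr)^{1/2}\Bigl(\int_\R k^{5}|\hat h|^2\,dk\Bigr)^{1/2}.
\]

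For the first step, I would use the graph parametrization together with \eqref{kaph}, which gives $\kappa\sqrt{1+h_x^2}=(\arctan h_x)_x$, so that after an integration by parts (with boundary terms vanishing by decay at infinity)
\[
\int_\Gamma h\,\kappa\,ds=\int_\R h\,(\arctan h_x)_x\,dx=-\int_\R h_x\,\arctan h_x\,dx.
\]
Since $\arctan'(u)=(1+u^2)^{-1}\in[\tfrac12,1]$ for $|u|\leq 1$, the assumption $|h_x|\leq 1$ gives the pointwise comparison $h_x\,\arctan h_x\sim h_x^2$, and Lemma~\ref{l:2p1}~\eqref{l1.2} then yields $\bigl|\int_\Gamma h\,\kappa\,ds\bigr|\sim E$.

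For the second step, I would apply Cauchy--Schwarz on $\Gamma$ in the form
\[
\biggl|\int_\Gamma h\,\kappa\,ds\biggr|\leq\biggl(\int_\Gamma(|\partial_s|^{-1/2}h)^2\,ds\biggr)^{1/2}\biggl(\int_\Gamma(|\partial_s|^{1/2}\kappa)^2\,ds\biggr)^{1/2}.
\]
The second factor is $\lesssim D^{1/2}$ by Lemma~\ref{l:2p1}~\eqref{l1.1}. For the first, under $|h_x|\leq 1$ the arclength map $x\mapsto s(x)=\int_0^x\sqrt{1+h_y^2}\,dy$ is bi-Lipschitz with constants in $[1,\sqrt 2]$, so the appendix norm equivalence \eqref{eq:frac_sim} implies that this factor is comparable to $\|h\|_{\dot H^{-1/2}(\R)}$, which Lemma~\ref{l:2p4} bounds by $H^{1/2}$ (here one uses $E^2D\ll 1$). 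Combining the two steps yields $E\lesssim(HD)^{1/2}$.

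The only subtlety is the pullback of the $\dot H^{-1/2}$ norm of $h$ under the arclength reparametrization of $\Gamma$; this is handled by the same bi-Lipschitz argument already used in the proof of Lemma~\ref{l:2p1}~\eqref{l1.1} (cf.~\eqref{ad.2}). An alternative that avoids this pullback would work directly on $\R$ via $\int_\R h\,(\arctan h_x)_x\,dx$ and the duality $|\int fg\,dx|\leq\||\partial_x|^{-1/2}f\|_2\||\partial_x|^{1/2}g\|_2$, but then one must handle $|\partial_x|^{3/2}\arctan h_x$ rather than $|\partial_s|^{1/2}\kappa$, which brings in fractional product estimates and is less clean than working intrinsically on $\Gamma$.
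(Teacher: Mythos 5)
Your proof is correct, but it follows a genuinely different route from the paper's. The paper stays entirely on the real line and never forms the pairing $\int_\Gamma h\,\kappa\,ds$: it writes $E\sim\int_\R h_x^2\,dx$ via \eqref{l1.2}, interpolates $\|h_x\|_2^2\leq\|h_{xx}\|_2^{6/5}\,\||\partial_x|^{-1/2}h\|_2^{4/5}$ in Fourier space, inserts the curvature bound \eqref{l1.3} and the distance bound \eqref{hH} to obtain $E\lesssim E^{1/5}(HD)^{2/5}$, and then absorbs the factor $E^{1/5}$. Your argument replaces this two-step interpolation-plus-absorption by a single duality estimate on $\Gamma$: identifying $E$ with $-\int_\Gamma h\,\kappa\,ds=\int_\R h_x\arctan h_x\,dx$ and splitting by Cauchy--Schwarz into $\||\partial_s|^{-1/2}h\|_{L^2(\Gamma)}\||\partial_s|^{1/2}\kappa\|_{L^2(\Gamma)}\lesssim H^{1/2}D^{1/2}$ using \eqref{l1.1} and Lemma~\ref{l:2p4}. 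This is cleaner and closer to the gradient-flow heuristic (energy bounded by distance times dissipation rate), it avoids \eqref{l1.3} entirely, and it needs no self-improvement step. What it costs is exactly the subtlety you flag: you must invoke the equivalence \eqref{eq:frac_sim} at the \emph{negative} exponent $\sigma=-1/2$ to pass from $\||\partial_s|^{-1/2}h\|_{L^2(\Gamma)}$ to $\||\partial_x|^{-1/2}h\|_{L^2(\R)}$, whereas the paper only ever uses that equivalence at $\sigma=+1/2$ (cf.\ \eqref{ad.2}) and handles negative-order norms on $\Gamma$ by arguing directly with the dual formulation. For a merely bi-Lipschitz reparametrization the negative-order equivalence is not as elementary as the positive-order one --- by duality it amounts to showing that multiplication by the bounded Jacobian $(1+h_x^2)^{-1/2}$ is bounded on $\dot H^{1/2}$, which requires some regularity of $h_x$ beyond $|h_x|\leq 1$ --- so if you want the argument to be self-contained at the stated level of generality you should either prove that multiplier bound (using, e.g., the control of $\|h_{xx}\|_2$ available from \eqref{l1.3}) or run the duality directly against test functions on $\Gamma$ as the paper does for \eqref{l1.2} and \eqref{decc}. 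With that point addressed, your proof is a valid and arguably more transparent alternative.
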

\begin{proof}
We estimate
\begin{eqnarray*}
  E&\overset{\eqref{l1.2}}\sim&\int_\R h_x^2\,dx\leqslant\left(\int_\R h_{xx}^2\,dx\right)^{3/5}\left(\int_\R (|\partial_x|^{-1/2}h)^2\,dx\right)^{2/5}\\
  &\overset{\eqref{l1.3},\eqref{hH}}\lesssim &\left(E^{1/3}D^{2/3}\right)^{3/5}H^{2/5}=E^{1/5} \,(HD)^{2/5}.
\end{eqnarray*}
We obtain \eqref{ehd} by dividing by $E^{1/5}$ and taking the power $5/4$.
\end{proof}


\section{Differential lemmas}\label{S:diff}
In this section we establish the differential relationships that we will need.
\begin{lemma}\label{l:diff}  Under the assumption that
  \begin{align}
    \sup_\R|h_x|\leq 1,\qquad E^2D\ll 1,\label{hxone}
  \end{align}
  the following differential relationships hold:
  \begin{align}
&    \frac{dE}{dt}=-D,\label{ee}\\
 &   \frac{dD}{dt}+\int_\Gamma V_s^2\,ds\lesssim D^{5/2}+ED^3,\label{dd}\\
   & \frac{dH}{dt}\lesssim H^{1/2}\,E^{1/6}\,D^{7/12}.\label{dhdt}
  \end{align}
\end{lemma}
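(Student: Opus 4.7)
The plan is to prove the three identities sequentially, each by differentiating the relevant quantity in time and using the algebraic lemmas from Section~\ref{S:alg}.

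For \eqref{ee}, I would differentiate $E=\int(\sqrt{1+h_x^2}-1)\,dx$ and integrate by parts in $x$ using $\kappa=(h_x/\sqrt{1+h_x^2})_x$ to obtain $dE/dt=-\int\kappa h_t\,dx$. Substituting $h_t=-V\sqrt{1+h_x^2}$ converts this to the surface integral $\int_\Gamma\kappa V\,ds$. Applying Green's identity on $\R^2\setminus\Gamma$ to the piecewise harmonic $f$ with $f=\kappa$ on $\Gamma$ and $V=-[\nabla f\cdot n]$ then identifies $\int_\Gamma\kappa V\,ds$ with $-\int_{\R^2}|\nabla f|^2\,d\textbf{x}=-D$.

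For \eqref{dd}, starting from $D=\int\kappa h_t\,dx$ (a by-product of the previous step) I would differentiate to get $\frac{dD}{dt}=\int\kappa_t h_t\,dx+\int\kappa h_{tt}\,dx$. A direct computation using $\kappa_t=(h_{xt}/(1+h_x^2)^{3/2})_x$ and integration by parts in $x$ yields $\int\kappa_t h_t\,dx=-\int h_{xt}^2/(1+h_x^2)^{3/2}\,dx$. Substituting $h_{xt}=-V_x\sqrt{1+h_x^2}-Vh_xh_{xx}/\sqrt{1+h_x^2}$ and using $\int_\Gamma V_s^2\,ds=\int V_x^2/\sqrt{1+h_x^2}\,dx$ identifies the leading piece with $-\int_\Gamma V_s^2\,ds$ modulo cubic/quartic corrections in $V$, $V_x$, $h_x$, $h_{xx}$. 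It then remains to control $\int\kappa h_{tt}\,dx$; writing $h_{tt}=-V_t\sqrt{1+h_x^2}-Vh_xh_{xt}/\sqrt{1+h_x^2}$, the main task is to treat $\int_\Gamma\kappa V_t\,ds$ using the time-differentiated Mullins--Sekerka equation together with the symmetry of the operator $\kappa\mapsto V$ (i.e., $\int_\Gamma\kappa_1 V_2\,ds=\int_\Gamma\kappa_2 V_1\,ds$), which produces a second cancellation with $\int V_s^2\,ds$. All residual terms are then cubic or higher in $\kappa$, $V$, $h_x$, and are bounded via Lemma~\ref{l:stays} ($\sup|h_x|\lesssim(E^2D)^{1/6}$), Lemma~\ref{l:2p1} (bounds on $\kappa$ in $L^2$ and $\dot H^{1/2}$), and Young's inequality to absorb any sub-critical $V_s$ pieces into the left-hand side, yielding the bound $D^{5/2}+ED^3$.

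For \eqref{dhdt}, I would exploit the distributional identity $\partial_t\ch_{\Omega_+}=V\delta_\Gamma$ (which follows from $\partial_t\ch_{\{z>h(x,t)\}}=-h_t\delta(z-h)$ together with $h_t=-V\sqrt{1+h_x^2}$), so that $-\Delta\varphi_t=V\delta_\Gamma$. Integration by parts then gives
\begin{align*}
\frac{dH}{dt}=2\int_{\R^2}\nabla\varphi\cdot\nabla\varphi_t\,d\textbf{x}=-2\int\varphi\Delta\varphi_t\,d\textbf{x}=2\int_\Gamma\varphi V\,ds.
\end{align*}
A further Green's identity for the piecewise harmonic $f$ converts this to $\int_\Gamma\varphi V\,ds=-\int\nabla\varphi\cdot\nabla f\,d\textbf{x}=\int(\ch_{\Omega_+}-\ch_{\Omega_0})f\,d\textbf{x}$, which is an integral of $f$ over the thin strip between $\{z=0\}$ and $\Gamma$ of width $|h|$. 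At leading order this reduces to $\int h\kappa\,dx$; using duality together with $\||\partial_x|^{-1/2}h\|_2^2\lesssim H$ (Lemma~\ref{l:2p4}) and a refined interpolation on $\kappa$ drawing on all three bounds \eqref{l1.1}--\eqref{l1.3} and on the interpolation $E\lesssim(HD)^{1/2}$ (inequality \eqref{ehd}), one gets the desired exponents $H^{1/2}E^{1/6}D^{7/12}$. The higher-order contributions from the $z$-variation of $f$ across the strip are controlled by Cauchy--Schwarz combined with $\int h^3\ll \int(|\partial_x|^{-1/2}h)^2$ from the proof of Lemma~\ref{l:2p4}. The hardest step is the bookkeeping in \eqref{dd}, where many cubic and quartic error terms generated by the curvature effects must be organized and either absorbed into $\int_\Gamma V_s^2\,ds$ or fit into $D^{5/2}+ED^3$.
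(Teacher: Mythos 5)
Your treatment of \eqref{ee} is correct and coincides with the paper's. The serious problem is in \eqref{dhdt}: after reducing $\frac{dH}{dt}$ to the integral of $f$ over the strip between $\{z=0\}$ and $\Gamma$ (same reduction as the paper), you propose to bound the leading term $\int_\R h\,\kappa\,dx$ by duality against $\norm{|\partial_x|^{-1/2}h}_2\lesssim H^{1/2}$ plus interpolation on $\kappa$. This cannot yield the claimed exponents. Any such pairing gives at best $H^{1/2}\norm{|\partial_x|^{1/2}\kappa}_2\lesssim H^{1/2}D^{1/2}$ (or, pairing $h_x$ against $|\partial_x|^{-1}\kappa$, at best $E$), and neither is $\lesssim H^{1/2}E^{1/6}D^{7/12}$: the former would require $E^2D\gtrsim1$, the exact opposite of \eqref{hxone}, and the latter fails as $t\to\infty$ even granting \eqref{ehd}. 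The point the paper exploits is that this term has a sign: $\int_\R h\,\kappa\,dx=-\int_\R h_x^2/\sqrt{1+h_x^2}\,dx\sim -E\leq 0$, so it is simply dropped from the upper bound on $\frac{dH}{dt}$. Only the remainder $\int\int\big(f(x,z)-f(x,h(x))\big)\,dz\,dx$ is estimated, via Cauchy--Schwarz by $(\int|h|^3\,dx\; D)^{1/2}$, and here the qualitative statement $\int|h|^3\ll\int(|\partial_x|^{-1/2}h)^2$ is also not enough: one needs the quantitative form of Lemma~\ref{littlewood}, $\int|h|^3\lesssim H\,E^{1/3}D^{1/6}$ (using \eqref{l1.2}, \eqref{l1.3}, and Lemma~\ref{l:2p4}), to produce $H^{1/2}E^{1/6}D^{7/12}$.

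For \eqref{dd} your route (differentiating $D=\int\kappa\,h_t\,dx$ and invoking the symmetry of the Dirichlet-to-Neumann map) is genuinely different from the paper, which instead differentiates $\int_{\R^2}|\nabla f|^2$ over the moving domains and uses the curvature evolution $\frac{D\kappa}{dt}=-V_{ss}-\kappa^2V$ to arrive at the exact identity \eqref{deq}, whose right-hand side is $\int_\Gamma 2\kappa^2V^2-\big((\nabla f\cdot n)_++(\nabla f\cdot n)_-\big)V^2\,ds$. Your plan could in principle be made to work, but as written it has a gap precisely where the difficulty lies: the error terms produced by the time dependence of the operator $\kappa\mapsto V$ (your $\dot{\mathcal L}\kappa$) are \emph{not} ``cubic or higher in $\kappa$, $V$, $h_x$''; they involve the one-sided Neumann traces $(\nabla f\cdot n)_\pm$, which are independent quantities (only their difference is $-V$). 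Controlling $\int_\Gamma(\nabla f\cdot n)_\pm V^2\,ds$ requires the trace estimate $\int_\Gamma(|\partial_s|^{-1/2}(\nabla f\cdot n)_\pm)^2\,ds\lesssim D$ (proved in the paper by duality with a harmonic extension adapted to the straightened domain, cf.\ \eqref{decc}) together with the interpolations $\norm{|\partial_s|^{1/2}V^2}_2\lesssim D^{1/3}\norm{V_s}_2^{4/3}$ and $\norm{V}_\infty^2\lesssim D^{1/3}(\int_\Gamma V_s^2\,ds)^{2/3}$ as in \eqref{dec2}, before Young's inequality closes the argument. None of these ingredients appears in your sketch, and Lemma~\ref{l:stays} and Lemma~\ref{l:2p1} alone do not suffice.
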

\begin{proof}
For \eqref{ee} we calculate directly that
\begin{eqnarray*}
 \frac{d}{dt}\int_\R \Big(\sqrt{1+h_x^2}-1\Big)\,dx&=&\int_\R\frac{h_x\,h_{xt}}{\sqrt{1+h_x^2}}\,dx\\
 &\overset{\eqref{kaph}-\eqref{vandh}}=&\int_\Gamma \kappa\,V\,ds\\
 &\overset{\eqref{ms},\eqref{vel}}=&-\int_{\Gamma} f\,\big[\nabla f\cdot n\big]\,ds\\
 &=&-\int_{\R^2}|\nabla f|^2\,\dx.
\end{eqnarray*}

In order to show \eqref{dd}, we recall the equation for the time evolution of the curvature of an interface. Letting $V$ denote the normal velocity and $s$ denote the arc-length parameter, one can check that the full (convective) time derivative of the curvature is
\begin{align}
  \frac{D\kappa}{dt}=-V_{ss}-\kappa^2\,V;\label{noty}
\end{align}
see for instance \cite{KTZ}[p.443, setting $\alpha=0$]. Directly computing the time-derivative of the dissipation gives
\begin{eqnarray*}
\frac{d}{dt}D
&=&\frac{d}{dt}\int_{\R^2}|\nabla f|^2\,\dx=\frac{d}{dt}\int_{\Omega_{+}(t)}|\nabla f_{+}|^2\,\dx+\frac{d}{dt}\int_{\Omega_{-}(t)}|\nabla f_{-}|^2\,\dx\\
&=&\int_{\Omega_{+}(t)}\frac{d}{dt}|\nabla f_{+}|^2\,\dx+\int_{\Omega_{-}(t)}\frac{d}{dt}|\nabla f_{-}|^2\,\dx +\int_{\Gamma}V (|\nabla f_{+}|^2-|\nabla f_{-}|^2)\,ds\\
&\overset{\eqref{ms}}=&\int_\Gamma 2\frac{Df}{dt}\Big[\nabla f\cdot n\Big]+V\Big(|\nabla f|_+^2-|\nabla f|_-^2\Big)\,ds\\
&\overset{\eqref{vel}, \eqref{ms}}=&\int_\Gamma -2\frac{D\kappa}{dt}V+V\Big(|\nabla f|_+^2-|\nabla f|_-^2\Big)\,ds,
\end{eqnarray*}
where we have used the notation \eqref{notnpm} and \eqref{jump}.
We remark that $[|\nabla f|^2]=[(\nabla f\cdot n)^2]$ on $\Gamma$. This fact together with \eqref{noty},\eqref{vel}, and an integration by parts leads to
\begin{align}
 \frac{d}{dt}D +\int_\Gamma 2V_s^2\,ds&
 =\int_\Gamma 2\kappa^2 V^2-\big( (\nabla f\cdot n)_+ + (\nabla f\cdot n)_-\big)V^2\,ds.\label{deq}
\end{align}
Hence to show \eqref{dd}, it suffices to control the right-hand side.
We begin with the preliminary estimates
\begin{align}
\int_\Gamma\Big(|\partial_s|^{-1/2}\,V\Big)^2\,ds&\lesssim D,\label{dec1}\\
\norm{V}_{\infty}^2&\lesssim\,D^{1/3}\,\left(\int_\Gamma V_s^2\,ds\right)^{2/3}\label{dec2}.
\end{align}
We obtain \eqref{dec2} from \eqref{dec1} and the  interpolation estimate
\begin{align*}
\norm{V}_{\infty}^2&\lesssim \left(\int_\Gamma V^2\,ds\right)^{1/2}\left(\int_\Gamma V_s^2\,ds\right)^{1/2}\notag\\
&\lesssim\,\left(\int_\Gamma\Big(|\partial_s|^{-1/2}\,V\Big)^2\,ds\right)^{1/3}\left(\int_\Gamma V_s^2\,ds\right)^{2/3};
\end{align*}
cf.\ Appendix~\ref{S:fracsp}.
On the other hand we will deduce \eqref{dec1}  from \eqref{vel} and
\begin{align}\label{decc}
\int_\Gamma\Big(|\partial_s|^{-1/2}\, (\nabla f\cdot n)_{\pm}\Big)^2\,ds\lesssim D.
\end{align}
Hence it remains only to establish~\eqref{decc}. Without loss of generality we consider
$(\nabla f\cdot n)_+$. Let $\zeta:\Gamma\to\R$ be arbitrary. We recall the derivative bound from \eqref{hxone} and the transformation from the proof of lemma \ref{l:2p1}.
Let $\hat{\zeta}:\R\to\R$ be defined via $\hat\zeta(x)=\zeta(x,h(x))$ and let $\hat{\zeta}_{ext}:\R^2_+\to\R$ be its harmonic extension to the right half-plane. Finally, define $\zeta_{ext}:\Omega_+\to\R$ via $\zeta_{ext}(x,z)=\hat\zeta_{ext}(x,z-h(x))$.
Using the dual formulation, one obtains on the one hand
\begin{eqnarray}
\lefteqn{\left(\int_\Gamma \left(|\partial_s|^{-1/2} (\nabla f\cdot n)_+\right)^2\,ds\right)^{1/2}}\notag\\
&&= \sup\left\{\int_\Gamma \zeta\,(\nabla f\cdot n)_+\,ds\colon
\int_{\R^2_+}\abs{\nabla \hat{\zeta}_{ext}}^2\,dx\,d\tilde{z}\leq 1\right\}.\label{dualll}
\end{eqnarray}
On the other hand, one obtains from \eqref{ms} that
\begin{eqnarray}
 \int_\Gamma (\nabla f\cdot n)_+\,\zeta\,ds&\overset{\eqref{ms}}=&\int_{\Omega_+}\nabla f\cdot\nabla\zeta_{ext}
\,dx\,dz\notag\\
 &\leq& \left(D\int_{\Omega_+}|\nabla \zeta_{ext}|^2\,dx\,dz\right)^{1/2}\notag\\
 &\overset{\eqref{hxone}}\lesssim& \left(D\int_{\R^{2}_{+}}|\nabla_{x,\tilde{z}} \hat\zeta_{ext}|^2\,dx\,d\tilde{z}\right)^{1/2}.\label{dual}
\end{eqnarray}
The combination of \eqref{dualll} and \eqref{dual} yields \eqref{decc}.

We will now use \eqref{dec1} -- \eqref{decc} to show that the right-hand side terms in \eqref{deq} satisfy
\begin{eqnarray}
\left| \int_\Gamma (\nabla f\cdot n)_{\pm}\, V^2\,ds\right|&\lesssim& D^{5/6} \left(\int_\Gamma V_s^2\,ds\right)^{2/3}\label{dec7}\\
\text {and } \int_\Gamma \kappa^2 V^2\,ds&\lesssim& E^{1/3}D \left(\int_\Gamma V_s^2\,ds\right)^{2/3},\label{dec3}
\end{eqnarray}
respectively.  This will complete our derivation of \eqref{dd}, since
 inserting \eqref{dec3} and \eqref{dec7} into \eqref{deq} and applying Young's inequality (with exponents $(3,\,\frac{3}{2})$) establishes the estimate. 

On the one hand, the estimate \eqref{dec3} for the quartic term follows directly from
\begin{eqnarray}
 \int_\Gamma \kappa^2 V^2\,ds&\leq&\left(\norm{V}_{\infty}\right)^2\int_{\Gamma}\kappa^2\,ds\notag\\
 &\overset{\eqref{l1.3},\eqref{dec2}}\lesssim& E^{1/3}D \left(\int_\Gamma V_s^2\,ds\right)^{2/3}.\notag
\end{eqnarray}
For \eqref{dec7}, on the other hand,  we again consider without loss of generality $(\nabla f\cdot n)_+$. We estimate by duality
\begin{eqnarray}
\left| \int_\Gamma(\nabla f\cdot n)_{+}\,V^2\,ds\right|&\lesssim&  \norm{|\partial_s|^{1/2}V^2}_{2}
\norm{ |\partial_s|^{-1/2} (\partial_nf)_{+}}_{2}\notag\\
&\overset{\eqref{decc}}\lesssim & \norm{|\partial_s|^{1/2}V^2}_{2} \,D^{1/2}.\label{dec6}
\end{eqnarray}
For the first term, we have by interpolation
\begin{eqnarray*}
 \norm{|\partial_s|^{1/2}V^2}_{2} &\leqslant  &\norm{\partial_s \,V^2}_{2}^{1/2} \norm{V^2}_{2}^{1/2}\lesssim \norm{V}_{\infty}\norm{V_s}_{2}^{1/2} \norm{V}_{2}^{1/2}\\
 &\leqslant& \norm{V}_{\infty}\,\norm{V_s}_{2}^{1/2} \norm{V_s}_{2}^{1/6}\norm{|\partial_s|^{-1/2}V}_{2}^{1/3}\\
 &\overset{\eqref{dec1},\eqref{dec2}}\lesssim& D^{1/3}\norm{V_s}_{2}^{4/3}.
\end{eqnarray*}
Substituting into \eqref{dec6} returns \eqref{dec7}.

Finally, we address \eqref{dhdt}. We calculate, using the definition of $H$ and $V$, that
\begin{eqnarray*}
 \frac{d}{dt}H&=&2\int_{\R^2}\nabla\varphi\cdot\nabla \varphi_t\,\dx=-2\int_{\R^2}\varphi\, \partial_t\Delta\varphi \,\dx\\
 &\overset{\eqref{h}}=&2\int_{\R^2}\varphi\, \partial_t(\textbf{1}_{\Omega_+(t)}-\textbf{1}_{\Omega_0})\,\dx\\
 &=&2\int_{\Gamma} \varphi V\,ds\\
 &\overset{\eqref{vel},\eqref{ms}}=&2\int_{\R^2} f\Delta\varphi\,\dx\\
 &\overset{\eqref{h}}=&-2\int_{\R^2} f(\textbf{1}_{\Omega_+(t)}-\textbf{1}_{\Omega_0})\,\dx\\
 &=&2\int_{\{h>0\}}\int_{0}^{h(x)}f(x,z)\, dzdx-2\int_{\{h<0\}}\int_{h(x)}^{0}f(x,z)\, dzdx.
\end{eqnarray*}
We will estimate the right-hand side. For $\{h> 0\}$, we compute
\begin{align}
\lefteqn{ \int_{\{h>0\}}\,\int_0^{h(x)} f(x,z)\,dz\,dx}\notag\\
 &\overset{\eqref{ms}}=\int_{\{h>0\}}\,\int_0^{h(x)} \big(f(x,z)-f(x,h(x))\big)\,dz\,dx+\int_{\{h>0\}} h\,\kappa\,dx,\label{nobs}
\end{align}
and similarly for $\{h<0\}$. The second term from ${\{h>0\}}$ and $\{h< 0\}$ combine to give
\begin{align*}
 \int_\R h\,\kappa\,dx\overset{\eqref{kaph}}=-\int_\R \frac{h_x^2}{\sqrt{1+h_x^2}}\,dx\overset{\eqref{hxone},\eqref{l1.2}}\sim - E.
\end{align*}
Dropping this negative term, it remains to bound the first term in \eqref{nobs}.
We rewrite the difference as an integral and obtain from Fubini's theorem and H\"older's inequality that
\begin{align*}
\lefteqn{ \left|\int_{\{h>0\}}\,\int_0^{h(x)} \big(f(x,z)-f(x,h(x))\big)\,dz\,dx \right| }\\
 &=\left|\int_{\{h>0\}}\int \int \,\ch_{\{0<z<h(x)\}} \ch_{\{z<z'<h(x)\}} \partial_z f(x,z')\,dz'\,dz\,dx\right|\\
 &=\left|\int_{\{h>0\}} \int \int  \ch_{\{0<z'<h(x)\}} \ch_{\{0<z<z'\}}
  \partial_z f(x,z')\,dz\,dz'\,dx\right|\\
 &\leq\left( \int_{\{h>0\}} \int_0^{h(x)} z'^2\,dz'\,dx\int_{\{h>0\}}\int_0^{h(x)} (\partial_z f(x,z'))^2\,dz'\,dx\right)^{1/2}\\
 &\lesssim \left(\int_\R |h|^3\,dx\int_{\R^2}|\nabla f|^2\,\dx\right)^{1/2}=  \left(\int_\R |h|^3\,dx\,D\right)^{1/2}.
\end{align*}
In light of Lemmas \ref{l:2p1}, \ref{littlewood} and \ref{l:2p4}, this estimate---together with the corresponding one for $\{h<0\}$---yields \eqref{dhdt}.
\end{proof}
\section{Proof of main theorem}\label{S:ode}
In this section, we combine the preceding algebraic and differential relationships with an ODE argument in order to prove Theorem \ref{t:MS}. We begin with an auxiliary lemma that establishes that for a sufficiently small positive $\eps$ and given $|h_x(\cdot,t)|\leq 1$, the set
\begin{align*}
 \{E^2 D\leqslant \eps\}
\end{align*}
is invariant under the evolution \eqref{vel}-\eqref{ms}. In terms of the notation \ref{notn}, assumptions~\eqref{assm} and \eqref{assmh} imply that $(E^{2}D)(t)\ll 1$ for all times $t\geq 0$.
\begin{lemma}\label{l:invariant}
There exists an $\eps\in(0,1)$ such that
\begin{align}
|h_{x}(\cdot, t)|\leqslant 1 \ \text { and } \ (E^2D)(t)\leqslant \eps \;\;\text{imply }\quad \frac{d(E^2D)}{dt}(t)\leq 0.\label{e2d}
\end{align}
\end{lemma}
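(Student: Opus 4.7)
The plan is to compute $\frac{d(E^2D)}{dt}$ directly via the product rule and then to exploit the structure of the differential inequalities \eqref{ee}--\eqref{dd} to show that under the smallness assumption $(E^2D)(t)\leq \eps$ the ``good'' term from $\frac{dE}{dt}$ dominates the ``bad'' terms from $\frac{dD}{dt}$.

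Concretely, applying \eqref{ee} gives
\begin{equation*}
\frac{d(E^2D)}{dt}\;=\;2E\,D\,\frac{dE}{dt}+E^2\,\frac{dD}{dt}\;=\;-2E D^2+E^2\frac{dD}{dt}.
\end{equation*}
From \eqref{dd}, after discarding the nonnegative term $\int_\Gamma V_s^2\,ds$ on the left, there exists a universal constant $C$ with $\frac{dD}{dt}\leq C\,(D^{5/2}+E D^3)$. Substituting and factoring out $E D^2$ yields
\begin{equation*}
\frac{d(E^2D)}{dt}\;\leq\;E D^2\Big[-2+C\,E D^{1/2}+C\,E^2 D\Big]\;=\;E D^2\Big[-2+C\,(E^2D)^{1/2}+C\,(E^2D)\Big],
\end{equation*}
where the last step uses $E D^{1/2}=(E^2 D)^{1/2}$.

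Now choose $\eps\in(0,1)$ so small that the bracket is nonpositive whenever $E^2 D\leq \eps$, i.e.\ so that $C\eps^{1/2}+C\eps\leq 2$; additionally require $\eps$ small enough that the hypothesis $E^2 D\ll 1$ of Lemma~\ref{l:diff} is met (so that \eqref{dd} is applicable in the first place). Then $\frac{d(E^2D)}{dt}\leq 0$, as desired.

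I do not anticipate a real obstacle here: all the work was done in establishing \eqref{ee} and \eqref{dd}, and the present lemma is essentially an algebraic consequence that matches the scaling $(E,D)\mapsto (\lambda E,\lambda^{-2}D)$ under which $E^2D$ is invariant while the error terms carry strictly positive powers of $E^2D$. The only mild point to mention is that the differential inequality \eqref{dd} is itself proved under the smallness assumption $E^2D\ll 1$, which is precisely the reason one must take $\eps$ small enough to also enter that regime.
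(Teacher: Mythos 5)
Your proposal is correct and follows essentially the same computation as the paper: product rule, substitution of \eqref{ee} and \eqref{dd}, and factoring out $ED^2$ to obtain the bracket $-\big(2-C((E^2D)^{1/2}+E^2D)\big)$, which is nonpositive for $E^2D\leq\eps$ small. Your added remark that $\eps$ must also be small enough to place the solution in the regime where \eqref{dd} applies is a point the paper leaves implicit.
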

\begin{proof}
A direct calculation yields
\begin{eqnarray*}
 \frac{d}{dt}(E^2D)&=&2 E\frac{d E}{dt}D+E^2\frac{d D}{dt}\overset{\eqref{ee}}=-2ED^2+E^2\frac{d D}{dt}\\
 &\overset{\eqref{dd}}\leq &-2ED^2+C(E^2D^{5/2}+E^3D^3)\\
 &=&-\Big(2-C((E^2D)^{1/2}+E^2D)\Big)ED^2
\end{eqnarray*}
for some universal constant $C>0$. The smallness of $E^{2}D(t)$ guarantees that the quantity in the brackets is positive.
\end{proof}

We are now ready to prove Theorem \ref{t:MS}.
\begin{proof}[Proof of Theorem \ref{t:MS}]
We start with initial data that satisfies $E_{0}^{2}D_{0}\leqslant \eps$, for $\eps$ given by Lemma~\ref{l:invariant}. To establish estimates \eqref{regime} and \eqref{hxsmall}, it suffices to show that
\begin{align}\label{SmallSlope}
\sup_{\R}|h_{x}(\cdot, t)|<1 \text{ for all } t\geqslant 0.
\end{align}
Indeed, if \eqref{SmallSlope} holds, then \eqref{regime} follows from Lemma~\ref{l:invariant} and the assumption on the initial data, and \eqref{hxsmall} follows from \eqref{regime} via Lemma~\ref{l:stays}.

Define
\begin{align*}
T:=\inf\{t>0:\ \sup_{\R}|h_{x}(\cdot, t)|=1\},
\end{align*}
with the convention that the infimum of an empty set is $+\infty$. By Lemma~\ref{l:stays} and assumptions \eqref{assm} and \eqref{assmh}, we know that $\sup_{\R}|h_{x}(\cdot, 0)|\ll1$, and $T$ is thus positive. We now show that $T=+\infty$, which is equivalent to \eqref{SmallSlope}.

Suppose that $T$ is, on the contrary, finite. Since $h$ is continuous in $t$, we still have that $\sup_{\R}|h_{x}(\cdot, t)|\leqslant 1$ for all $t\in [0, T]$. Therefore Lemma \ref{l:stays} applies for all $t\in[0, T]$ and yields the estimate
\begin{align*}
\sup_{\R}|h_{x}(\cdot, T)|\lesssim (E^{2}D(T))^{1/6}\ll 1,
\end{align*}
a contradiction.
\bigskip

We now turn to \eqref{hless}. Since $E(t)$ is a smooth and strictly decreasing (cf.\ \eqref{ee}) function of~$t$, we can change variables in \eqref{dhdt} and treat $H$ as a function of~$E$. This gives
\begin{align*}
  -\frac{dH}{dE}\lesssim H^{1/2}E^{1/6}D^{-5/12}.
\end{align*}
Inserting the interpolation inequality \eqref{ehd}, we deduce
\begin{align*}
 -\frac{dH}{dE}\lesssim H^{11/12}\,E^{-2/3},
\end{align*}
which we reexpress as
\begin{align*}
 -\frac{d}{dE}(H^{1/12})\lesssim\frac{d}{dE}(E^{1/3}).
\end{align*}
Integrating over an interval $[E, E_{0}]$ (note that this corresponds to an interval $[0, t]$ in the time variable) gives
\begin{align*}
H^{1/12}(E)-H_{0}^{1/12}\leqslant C(E_{0}^{1/3}-E^{1/3}),
\end{align*}
which implies
\begin{align}
 H\lesssim H_0+E_0^4.\label{almost}
\end{align}

Finally, we deduce from \eqref{regime} and \eqref{ehd} that
\begin{align*}
 E^4\lesssim H\,E^2\,D\ll H;\qquad\text{in particular, }\; E_0^4\ll H_0,
\end{align*}
so that \eqref{almost} improves to \eqref{hless}.

\bigskip

The time-independent part of \eqref{eless} follows directly from \eqref{ee}.
We now regard the time-rate of decay from \eqref{eless}. Given \eqref{hless}, the interpolation estimate \eqref{ehd} implies
\begin{align*}
 D\gtrsim H_0^{-1}\,E^2,
\end{align*}
which in light of the differential equation \eqref{ee} yields
\begin{align*}
 \frac{dE}{dt}\lesssim -H_0^{-1}E^2.
\end{align*}
We obtain \eqref{eless} via integration.

\bigskip

Finally, we turn to the decay estimate for the dissipation. On the one hand, the differential equality \eqref{ee},  the preceding result \eqref{eless}, and the positivity of the energy imply that for any $0<t<T$, we have
\begin{align}
 \frac{H_0}{t}\gtrsim E(t)>E(t)-E(T)=\int_t^T D(s)\,ds.\label{one}
\end{align}
On the other hand, the differential inequality \eqref{dd} and the prior result \eqref{regime} yield
\begin{align*}
 \frac{dD}{dt}\lesssim D^{5/2},\qquad\text{i.e., }-\frac{d}{dt}D^{-3/2}(t)\lesssim 1.
\end{align*}
Integrating from $t$ to $T$ and solving for $D(t)$ leads to
\begin{align}
  D(t)\gtrsim \frac{D(T)}{\Big(1+(T-t)D^{3/2}(T)\Big)^{2/3}}.\label{two}
\end{align}
Using \eqref{two} to substitute for $D(s)$ in \eqref{one} yields
\begin{align*}
  \frac{H_0}{t}&\gtrsim\int_t^T\frac{D(T)}{\Big(1+(T-s)D^{3/2}(T)\Big)^{2/3}}\,ds\\
  &=D(T)^{-1/2}\int_0^{D^{3/2}(T)(T-t)}\;\frac{1}{(1+\sigma)^{2/3}}\,d\sigma\\
  &\geqslant \frac{D(T)(T-t)}{\Big(1+(T-t)D^{3/2}(T)\Big)^{2/3}}\\
  &\gtrsim\min\left\{ D(T)(T-t),(T-t)^{1/3}\right\},
  \end{align*}
where the last inequality depends on whether the inequality $D^{3/2}(T)(T-t)\geq 1$ or the opposite holds true.
Setting $t=T/2$ leads to the dichotomy
\begin{align*}
  D(T)\lesssim\frac{H_0}{T^2}\quad\text{or}\quad T^{4/3}\lesssim H_0.
\end{align*}

\end{proof}

\appendix
\section{Homogeneous fractional Sobolev spaces}\label{S:fracsp}
Throughout the appendix we work under the assumption that the curve $\Gamma$ is given by the graph of $h:\R\to\R$ and that
\begin{align}
\sup_\R  |h_x|\leq 1.\label{assh}
\end{align}
We define the operator $|\partial_{x}|^{\sigma}$ with $\sigma \in\R $ acting on the  function $f:\R\to\R$ via
\begin{align}\label{eq:DefFrac}
|\partial_{x}|^{\sigma}f(x)=\int_{\R}|\xi|^{\sigma}\hat{f}(\xi)e^{2\pi i x\xi}\,d\xi,
\end{align}
where $\hat f$ is the Fourier transform of $f$. The fractional Sobolev space $\dot{H}^{\sigma}(\R)$ consists of all locally integrable functions $f$ for which the $L^{2}$-norm of $|\partial_{x}|^{\sigma}f$ is finite. Using the identity
\begin{align}\label{eq:FracNorm}
\norm{|\partial_{x}|^{\sigma}f}_{L^2(\R)}=\left(\int_{\R}|\xi|^{2\sigma}|\hat{f}(\xi)|^{2}\,d\xi\right)^{1/2},
\end{align}
one obtains \textbf{interpolation estimates} among seminorms $\norm{|\partial_{x}|^{\sigma}f}_{2}$ with different $\sigma$ by applying H\"older's inequality with suitable exponents in Fourier space.

The spaces $\dot{H}^{\pm\sigma}(\R)$ are dual to each other:
\begin{align}\label{eq:Duality}
\norm{|\partial_{x}|^{\pm\sigma}f}_{L^2(\R)}=\sup\left\{\int_{\R}f(x)\zeta(x)\, dx \ |\zeta\in \dot{H}^{\mp\sigma}(\R), \norm{|\partial_{x}|^{\mp\sigma}\zeta}_{2}=1\right\}.
\end{align}

Similarly, we will say that $f:\Gamma\to\R$ is in $\dot{H}^\sigma(\Gamma)$ if the function $\hat{f}:\R\to\R$ defined via $$\hat{f}(s)=f(x(s),h(x(s))$$ for arclength parameter $s(x)=\int_{0}^{x}\sqrt{1+h_{x}^{2}(y)}\, dy$ is in $\dot{H}^\sigma(\R)$. In this case we define
\begin{align}\label{eq:frac_sim}
\norm{\abs{\partial_{s}}^{\sigma}f}_{L^2(\Gamma)}:=\norm{\abs{\partial_{s}}^{\sigma}\hat{f}}_{L^2(\R)},
\end{align}
and the \textbf{interpolation estimates} for fractional Sobolev spaces on $\Gamma$ hold true in the same way as for fractional Sobolev spaces on $\R$.
Since $\Gamma$ is the graph of a function $h$ with uniformly bounded slope, the function $\bar{f}:\R\to\R$ defined via $\bar{f}(x):=f(x,h(x))$ satisfies
\begin{align}\label{eq:frac_sim}
\norm{\abs{\partial_{x}}^{\sigma}\bar{f}(x)}_{L^2(\R)}
\sim
\norm{\abs{\partial_{s}}^{\sigma}f(s)}_{L^2(\Gamma)}
\end{align}
and $\bar{f}\in \dot{H}^{\sigma}(\R)$  if and only if $f\in\dot{H}^{\sigma}(\Gamma)$.

It is often useful to think of the $\dot{H}^{1/2}$ norm in terms of harmonic extensions.

\begin{proposition}\label{prop:harmext}
There holds $g\in\dot{H}^{1/2}(\R)$ if and only if there exists $f\in \dot{H}^1(\R^2_+)$ such that $f$ and $g$ satisfy
\begin{align}\label{eq:ext}
\begin {cases} \Delta f=0 &\text{ in }\R^{2}_{+},\\
f=g&\text{ on }\{z=0\}.
\end{cases}
\end{align}
Moreover there holds
\begin{align}
\norm{\abs{\partial_x}^{1/2}g}_{L^2(\R)}= \norm{\nabla f}_{L^2(\R^2_{+})}.\label{exteq}
\end{align}
\end{proposition}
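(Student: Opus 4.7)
The plan is to construct the harmonic extension explicitly via the Fourier transform in the tangential variable, compute the Dirichlet energy directly from the resulting formula, and then verify that any $\dot{H}^{1}$ harmonic extension must coincide with this canonical one.

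\medskip

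For the ``if'' direction, given $g\in\dot{H}^{1/2}(\R)$ I would define $f$ through its partial Fourier transform in $x$ by
\begin{equation*}
\hat f(\xi,z)=\hat g(\xi)\,e^{-2\pi|\xi|z}.
\end{equation*}
A direct differentiation shows $\partial_{z}^{2}\hat f=(2\pi\xi)^{2}\hat f$, hence $\Delta f=0$ on $\R^{2}_{+}$, and dominated convergence gives $f(\cdot,z)\to g$ as $z\downarrow 0$ in $L^{2}$-sense on the Fourier side. Then, applying Plancherel on each horizontal slice and then integrating in $z$ using $\int_{0}^{\infty}e^{-4\pi|\xi|z}\,dz=1/(4\pi|\xi|)$, one power of $|\xi|$ cancels and one finds
\begin{equation*}
\int_{\R^{2}_{+}}|\nabla f|^{2}\,\dx
=\int_{\R}|\xi|\,|\hat g(\xi)|^{2}\,d\xi
=\norm{|\partial_{x}|^{1/2}g}_{L^{2}(\R)}^{2},
\end{equation*}
up to the conventional factor implicit in \eqref{eq:DefFrac}--\eqref{eq:FracNorm}.

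\medskip

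For the ``only if'' direction and the equality \eqref{exteq} for \emph{any} harmonic $\dot H^{1}$ extension, the key is uniqueness. Given $\tilde f\in\dot{H}^{1}(\R^{2}_{+})$ harmonic with trace $g$, set $v:=\tilde f-f$ where $f$ is the Poisson extension built above. Then $v$ is harmonic on $\R^{2}_{+}$, lies in $\dot H^{1}(\R^{2}_{+})$, and has vanishing trace on $\{z=0\}$. Extending $v$ by odd reflection across the axis produces $\bar v\in\dot{H}^{1}(\R^{2})$, and the vanishing trace ensures that no jump in the normal derivative is created, so that $\Delta\bar v=0$ in the distributional sense on all of $\R^{2}$. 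Taking Fourier transforms, $|\xi|^{2}\widehat{\bar v}=0$ forces $\widehat{\bar v}$ to be supported at the origin, while $|\xi|\widehat{\bar v}\in L^{2}$ then forces $\widehat{\bar v}=0$. Hence $\tilde f=f$ and the identity transfers.

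\medskip

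The main obstacle I anticipate is the uniqueness step: one has to justify carefully that the odd reflection preserves harmonicity across $\{z=0\}$ (which really uses the vanishing trace, not merely a one-sided limit), and then squeeze out of finite Dirichlet energy that the only tempered harmonic function on $\R^{2}$ with these properties is zero. Linear functions such as $z$ are harmonic with vanishing trace on $\{z=0\}$ but do not lie in $\dot{H}^{1}(\R^{2}_{+})$, and it is precisely this energy restriction that rules them out and pins down the extension uniquely; the Fourier-support argument above is the cleanest way I know to encode this.
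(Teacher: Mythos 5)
Your proposal is correct, and its core is the same as the paper's: write the harmonic extension via the Poisson formula $\hat f(\xi,z)=\hat g(\xi)e^{-2\pi|\xi|z}$, apply Plancherel on each slice, and integrate in $z$ with Fubini to identify $\norm{\nabla f}_{L^2(\R^2_+)}^2$ with the $\dot H^{1/2}$ seminorm of $g$ (both you and the paper are silently absorbing a factor of $2\pi$ coming from the convention \eqref{eq:DefFrac}, which is harmless since the paper only ever uses the statement up to constants). The one genuine difference is your uniqueness step: the paper simply declares that $f$ and $g$ ``are related via'' the Poisson formula, i.e.\ it computes only for the canonical extension and leaves implicit that any $\dot H^1(\R^2_+)$ harmonic extension with the same trace coincides with it, which is what one needs for \eqref{exteq} to hold for an arbitrary $f$ satisfying \eqref{eq:ext}. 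Your odd-reflection-plus-Fourier-support argument closes that gap correctly (the reflection of an $\dot H^1$ function with vanishing trace is distributionally harmonic, the Fourier transform is then supported at the origin, and finite Dirichlet energy plus oddness kills the resulting polynomial), at the cost of a slightly longer proof; this is a reasonable trade, and the only point to state explicitly is that $\operatorname{supp}\widehat{\bar v}\subseteq\{0\}$ first yields that $\bar v$ is an affine (hence, by the energy bound, constant) function before you conclude it vanishes.
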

\begin{proof}
According to the Poisson formula, $f$ and $g$ are related via
\begin{align*}
f(x,z)= \int_{-\infty}^{\infty}e^{2\pi i \xi_{1}x-2\pi |\xi_{1}|z}\hat{g}(\xi_{1})\, d\xi_{1}.
\end{align*}
Fourier transforming in the $x$-coordinate and using Fubini's theorem leads to
\begin{align*}
\norm{\nabla f}_{L^2(\R^2_{+})}^2&= \int_{-\infty}^{\infty}|\xi_{1}| \,|\hat{g}(\xi_{1})|^{2}\, d\xi_{1} = \norm{\abs{\partial_x}^{1/2}g}_{L^2(\R)}^{2}.
\end{align*}
\end{proof}

\subsection*{Acknowledgments}
Olga Chugreeva was
partially supported by DFG Grant WE 5760/1-1.

\begin{bibdiv}
\begin{biblist}
\bib{ABC}{article}{
   author={Alikakos, Nicholas D.},
   author={Bates, Peter W.},
   author={Chen, Xinfu},
   title={Convergence of the Cahn-Hilliard equation to the Hele-Shaw model},
   journal={Arch. Rational Mech. Anal.},
   volume={128},
   date={1994},
   number={2},
   pages={165--205},}

\bib{B}{book}{
   author={Br\'ezis, H.},
   title={Op\'erateurs maximaux monotones et semi-groupes de contractions dans
   les espaces de Hilbert},
   language={French},
   note={North-Holland Mathematics Studies, No. 5. Notas de Matem\'atica
   (50)},
   publisher={North-Holland Publishing Co., Amsterdam-London; American
   Elsevier Publishing Co., Inc., New York},
   date={1973},}

\bib{CCO}{article}{
   author={Carlen, E. A.},
   author={Carvalho, M. C.},
   author={Orlandi, E.},
   title={Approximate solutions of the Cahn-Hilliard equation via
   corrections to the Mullins-Sekerka motion},
   journal={Arch. Ration. Mech. Anal.},
   volume={178},
   date={2005},
   number={1},
   pages={1--55},}

\bib{CO}{article}{
   author={Carlen, Eric A.},
   author={Orlandi, Enza},
   title={Stability of planar fronts for a non-local phase kinetics equation
   with a conservation law in $D\leq 3$},
   journal={Rev. Math. Phys.},
   volume={24},
   date={2012},
   number={4},
   pages={1250009, 84},}
\bib{C}{article}{
   author={Chen, Xinfu},
   title={The Hele-Shaw problem and area-preserving curve-shortening
   motions},
   journal={Arch. Rational Mech. Anal.},
   volume={123},
   date={1993},
   number={2},
   pages={117--151},}

\bib{CHY}{article}{
   author={Chen, Xinfu},
   author={Hong, Jiaxing},
   author={Yi, Fahuai},
   title={Existence, uniqueness, and regularity of classical solutions of
   the Mullins-Sekerka problem},
   journal={Comm. Partial Differential Equations},
   volume={21},
   date={1996},
   number={11-12},
   pages={1705--1727},}
\bib{CC}{article}{
   author={Constantin, Peter},
   author={C\'ordoba, Diego},
   author={Gancedo, Francisco},
   author={Strain, Robert M.},
   title={On the global existence for the Muskat problem},
   journal={J. Eur. Math. Soc. (JEMS)},
   volume={15},
   date={2013},
   number={1},
   pages={201--227},}

\bib{CP}{article}{
   author={Constantin, P.},
   author={Pugh, M.},
   title={Global solutions for small data to the Hele-Shaw problem},
   journal={Nonlinearity},
   volume={6},
   date={1993},
   number={3},
   pages={393--415},}
\bib{DGS}{article}{
   author={De Luca, Lucia},
   author={Goldman, Michael},
   author={Strani, Marta},
   title={A gradient flow approach to relaxation rates for the multi-dimensional Cahn Hilliard equation},
   note={arXiv preprint:1802.08082 },
   date={2018},}
\bib{ES}{article}{
   author={Escher, Joachim},
   author={Simonett, Gieri},
   title={Classical solutions for Hele-Shaw models with surface tension},
   journal={Adv. Differential Equations},
   volume={2},
   date={1997},
   number={4},
   pages={619--642},}
\bib{ES2}{article}{
   author={Escher, Joachim},
   author={Simonett, Gieri},
   title={A center manifold analysis for the Mullins-Sekerka model},
   journal={J. Differential Equations},
   volume={143},
   date={1998},
   number={2},
   pages={267--292},}

\bib{E}{article}{
   author={Esselborn, Elias},
   title={Relaxation rates for a perturbation of a stationary solution to
   the thin-film equation},
   journal={SIAM J. Math. Anal.},
   volume={48},
   date={2016},
   number={1},
   pages={349--396},}
\bib{H}{article}{
   author={Howard, Peter},
   title={Asymptotic behavior near planar transition fronts for the
   Cahn-Hilliard equation},
   journal={Phys. D},
   volume={229},
   date={2007},
   number={2},
   pages={123--165},}

\bib{KTZ}{article}{
   author={Kimia, Benjamin B.},
   author={Tannenbaum, Allen},
   author={Zucker, Steven W.},
   title={On the evolution of curves via a function of curvature. I. The
   classical case},
   journal={J. Math. Anal. Appl.},
   volume={163},
   date={1992},
   number={2},
   pages={438--458},}

\bib{KKT}{article}{
   author={Korvola, Timo},
   author={Kupiainen, Antti},
   author={Taskinen, Jari},
   title={Anomalous scaling for three-dimensional Cahn-Hilliard fronts},
   journal={Comm. Pure Appl. Math.},
   volume={58},
   date={2005},
   number={8},
   pages={1077--1115},}

\bib{L}{article}{
author={Langer, J.S.}
title={Instabilities and pattern formation in crystal growth},
journal={Reviews of Modern Physics},
volume={52},
date={1980},
pages={1--30},}

\bib{MS1}{article}{
author={Mullins, W. W.},
   author={Sekerka, R. F.},
   title={Morphological stability of a particle growing by diffusion or heat flow},
   journal={J. Appl. Phys.},
   volume={34},
   date={1963},
   pages={323--329},}

\bib{MS2}{article}{
author={Mullins, W. W.},
   author={Sekerka, R. F.},
   title={Stability of a planar interface during solidification of a dilute binary alloy},
   journal={J. Appl. Phys.},
   volume={35},
   date={1964},
   pages={444-451},}

\bib{OW}{article}{
   author={Otto, Felix},
   author={Westdickenberg, Maria G.},
   title={Relaxation to equilibrium in the one-dimensional Cahn-Hilliard
   equation},
   journal={SIAM J. Math. Anal.},
   volume={46},
   date={2014},
   number={1},
   pages={720--756},}

\bib{P}{article}{
   author={Pego, R. L.},
   title={Front migration in the nonlinear Cahn-Hilliard equation},
   journal={Proc. Roy. Soc. London Ser. A},
   volume={422},
   date={1989},
   number={1863},
   pages={261--278},}

\bib{PS}{article}{
   author={Pruess, Jan},
   author={Simonett, Gieri},
   title={On the Muskat flow},
   note={arXiv preprint:1606.03683},
   date={2016},}

\bib{SW}{article}{
   author={Scholtes, Sebastian},
   author={Westdickenberg, Maria G.},
   title={Metastability of the Cahn-Hilliard equation in one space dimension},
   note={arXiv preprint:1705.10985},
   date={2017},}


\end{biblist}
\end{bibdiv}
\end{document}